\declaretheoremstyle[bodyfont=\normalfont]{normalbody}
\theoremstyle{break}
\newtheorem{thrm}{Theorem}
\newtheorem{coro}{Corollary}
\newtheorem{assumption}{Assumption}
\declaretheorem[style=normalbody]{Algorithm}
\begin{document}

\title[MLMC McKean-Vlasov]{A Multilevel Monte Carlo method for a class of McKean-Vlasov processes}
\author{L.F. Ricketson}
\address{Courant Institute, New York University, New York, NY 11226}
\email{ricketson@cims.nyu.edu}
\date{\today}

\maketitle

\begin{abstract}
We generalize the multilevel Monte Carlo (MLMC) method of Giles to the simulation of systems of particles that interact via a mean field.  When the number of particles is large, these systems are described by a McKean-Vlasov process - a stochastic differential equation (SDE) whose coefficients depend on expectations of the solution as well as pathwise data.  In contrast to standard MLMC, the new method uses mean field estimates at coarse levels to inform the fine level computations.  Using techniques from the theory of propagation of chaos, we prove convergence and complexity results for the algorithm in a special case.  We find that the new method achieves $L^1$ error of size $\varepsilon$ with $O(\varepsilon^{-2} (\log \varepsilon)^5)$ complexity, in contrast to the $O(\varepsilon^{-3})$ complexity of standard methods.  We also prove a variance scaling result that strongly suggests similar performance of the algorithm in a more general context.  We present numerical examples from applications and observe the expected behavior in each case.  
\end{abstract}

\section{Introduction}
The multilevel Monte Carlo (MLMC) method introduced in \cite{giles2008multilevel} has proven to be a powerful tool for simulation of stochastic differential equations (SDEs) and related models.  The method and its variants have found applications in finance \cite{ben2014multilevel,belomestny2013pricing,burgos2012computing,giles2009multilevel} (to name a few), biochemical kinetics \cite{anderson2012multilevel,anderson2014complexity}, plasma physics \cite{ricketson2014two,rosin2014multilevel}, and porous media flow \cite{efendiev2015multilevel,ketelsen2013hierarchical,muller2013multilevel,muller2014solver}, among others.  

In short, the method is applied to a stochastic differential equation (SDE)
\begin{equation} \label{SDE}
	dX_t = a(X_t,t) dt + b(X_t,t) dW_t
\end{equation}
by introducing a hierarchy of time step ``levels" $\Delta t_\ell \propto 2^{-\ell}$ and taking advantage of correlations between simulations at adjacent levels to achieve variance reduction.  Typically, the goal is to compute the expectation of some functional of the SDE's solution.  Standard MLMC achieves this with root-mean-square error $\varepsilon$ in $O(\varepsilon^{-2}(\log \varepsilon)^2)$ time, a dramatic improvement over the $O(\varepsilon^{-3})$ time required by naive Monte Carlo.  

However, the modeling power of SDEs - and all the models used in the references above - is limited by the absence of interaction between distinct realizations.  In a wide array of applications, a large number of particles or agents are not only subject to some external stochastic forcing, but also interactions amongst themselves.  Examples of this behavior abound in physics, including kinetic descriptions of plasmas and rarefied gases, as well as ferromagnets.  In economics and social sciences, many realistic models must recognize that individual agents are influenced by the actions of others \cite{carmona2013control,tembine2011mean}.  Analogous statements hold true for various systems in biology \cite{morale2005interacting,shimizu1974muscular, shimizu1972phenomenological, touboul2011mean,zhu2011hybrid}.  

In many cases, these interactions are mediated by one or more mean fields - that is, ensemble averages over all the particles.  A quite general form for this type of interaction is captured in the system of SDEs
\begin{equation} \label{finiteN}
	dX^i_t = \alpha \left( X^i_t, t, \frac{1}{N_p} \sum_{j=1}^{N_p} R(X^j_t) \right) dt + \beta \left( X^i_t, t, \frac{1}{N_p} \sum_{j=1}^{N_p} R(X^j_t) \right) dW^i_t.
\end{equation}
Here, $X^i_t \in \R^d$ is the $i^\textrm{th}$ particle trajectory at time $t$, the $W^i_t$ are independent $D$-dimensional standard Brownian motions, $N_p$ is the total number of particles, $R: \R^d \rightarrow \R^\gamma$ governs the mean-field interaction, $\alpha: \R^d \times \R^\gamma \rightarrow \R^d$, and $\beta: \R^d \times \R^\gamma \rightarrow \R^{d \times D}$.  

For any fixed $N_p$, this is a system of $N_pd$ coupled SDEs for the $X^i_t$, $i = 1, 2, ..., N_p$.  In applications, particularly those arising from physics, $N_p$ is exceedingly large - $N_p \approx 10^{23}$ is not unusual.  This makes storage and evolution of even a single sample of the $N_pd$ dimensional system state computationally intractable, much less the many samples required for a Monte Carlo simulation.  Intuitively, one would like to view each of the $X^i_t$ as a $d$-dimensional sample from a single overarching stochastic process, thereby dramatically reducing the dimensionality of the problem\footnote{This is closely related to the ``molecular chaos" assumption used in deriving the Botlzmann equation from the BBGKY hierarchy.}.  

The mathematical justification for this intuition was developed by McKean \cite{mckean1967propagation,mckean1966class}, who showed that in the limit $N_p \rightarrow \infty$, the evolution of each $X^i_t$ converges to
\begin{equation} \label{MV}
	dX_t = \alpha \left( X_t, t, \E R(X_t) \right) dt + \beta \left( X_t, t, \E R(X_t) \right) dW_t
\end{equation}
in the strong sense\footnote{This is a special case of McKean's result.  In general, the pathwise and mean field dependencies need not be factorable.  For example, $dX = \E_{X'} \left[ (1 + (X - X')^2)^{-1} \right] \, dt + dW$, where $X'$ is iid relative to $X$, is a McKean-Vlasov equation to which McKean's theorem applies, but cannot be written in the form (\ref{MV}). However, the factorization greatly simplifies application of MLMC and is present in many applications.} at rate $N_p^{-1/2}$, subject to reasonable assumptions on $\alpha$, $\beta$, and $R$.  It is important to note that $X^i_t$ and $X^j_t$ satisfying (\ref{MV}) are independent for $i\neq j$ when independent Brownian motions are used, while this is not the case for distinct particles in (\ref{finiteN}).  This result and others of a similar character have come to be called ``propagation of chaos" theorems - see \cite{sznitman1991topics} for a review.  

The process (\ref{MV}) is an instance of a McKean-Vlasov process - a term which has come to encompass any SDE whose coefficients depend on the probability density of the solution as well as its path-wise behavior.  In the same way that the probability density of an SDE's solution solves the forward Kolmogorov (i.e.\ Fokker-Planck) equation, the probability density $p(x,t)$ corresponding to (\ref{MV}) solves the following nonlinear, nonlocal PDE \cite{mckean1966class}:
\begin{equation} \label{MVPDE}
	\partial_t p + \nabla \cdot \left\{ \alpha \left( x, t, \bar{R} \right) p \right\} = \frac{1}{2} \frac{\partial^2}{\partial x_i \partial x_k} \left\{ \beta_{ij}(x,t,\bar{R}) \beta_{kj}(x,t,\bar{R}) p \right\},
\end{equation}
where
\begin{equation} \label{MFforPDE}
	\bar{R} \coloneqq \int_{\R^d} R(y) p(y,t) \, dy
\end{equation}
and summation over repeated indices is implied. 

Any Monte Carlo method for (\ref{MV}) is thus expected also to be a valuable numerical approach for PDEs of type (\ref{MVPDE})-(\ref{MFforPDE}) in high dimension.  Examples include the Vlasov and Landau-Fokker-Planck equations governing kinetic plasma dynamics, and Fokker-Planck approximations of the Boltzmann equation, each of which have $d=6$.

Our new multilevel method extends the computational gains MLMC affords for SDEs to McKean-Vlasov processes of type (\ref{MV}).  In particular, only a logarithmic increase in complexity is seen relative to standard MLMC.  In contrast to previous methods, the approach requires coupling each level in the scheme to all lower levels.  This enables low variance estimates of the mean field at high levels, even when few particles are sampled.  This coupling does complicate analysis of the scheme, since distinct samples and levels are not independent.  This is overcome through use of propagation of chaos techniques.  Some subtleties in implementation are also introduced, but these only necessitate small algorithmic changes.  

The remainder of the paper is structured as follows.  Section 2 reviews the requisite background material, including standard MLMC for SDEs and single-level schemes for McKean-Vlasov processes.  Section 3 describes the motivation and intuition behind the multilevel scheme for McKean-Vlasov processes, then outlines the algorithm.  Section 4 develops a partial theory of the complexity and convergence of the algorithm.  Section 5 presents numerical examples, in which convergence is observed in settings more general than our theory requires.  We conclude in section 6.  Some of the more technical proofs are confined to appendices.  


\section{Background}
\subsection{MLMC for SDEs}
In its original form \cite{giles2008multilevel}, MLMC is a numerical technique for solving the following problem: If $X_t \in \R^d$ satisfies (\ref{SDE}) with $X_0$ known, estimate 
\begin{equation}
	\bar{P} \coloneqq \E \left[ P(X_t) \right].
\end{equation}
Here, $P: \R^d \times [0,T] \rightarrow \R$ is some pre-specified functional of the solution $X_t$.  In finance, $P$ is frequently called the `payoff function' - we will use this terminology here, even when discussing other applications.

MLMC improves on the standard Monte Carlo approach, which is to discretize in time, usually with the Euler-Maruyama scheme 
\begin{equation}
	X_{n+1} = X_n + a(X_n,t_n) \Delta t + b(X_n,t_n)\Delta W_n,
\end{equation}
where $X_n \approx X_{t_n}$, $t_n = n\Delta t$, and the $\Delta W_n$ are independent $N(0,\Delta t)$ random variables.  It is well known that this scheme has weak order 1 and strong order 1/2 \cite{kloeden1992numerical}.  One generates $N$ independent samples of the solution - indexed by $i$ - and estimates
\begin{equation}
	\bar{P} \approx \frac{1}{N} \sum_{s=1}^N P\left( X_n^i \right).
\end{equation}

This method - we will call it `standard Monte Carlo' - has time-stepping error $O(\Delta t)$ and sampling error $O(1/\sqrt{N})$.  The computational complexity $\kappa$ is proportional to the total number of time steps taken, and thus scales as
\begin{equation}
	\kappa \sim \frac{N}{\Delta t} = O \left( \varepsilon^{-3} \right),
\end{equation}
where $\varepsilon$ is the root-mean-square error in the estimate of $\bar{P}$.  In general, a scheme with weak order $q$ will have $\kappa \sim \varepsilon^{-(2+1/q)}$.  

MLMC improves upon this using an iterated control variate strategy.  In the simplest version, one introduces a hierarchy of time-steps $\Delta t_\ell = \Delta t_0 2^{-\ell}$, for $\ell = 0, 1, ..., L$, and uses the level $\ell-1$ process as a control variate for the level $\ell$ process.  More concretely, if $X^\ell$ is the process at level $\ell$, one wishes to approximate $\E [P(X^L)]$.  The identity
\begin{equation} \label{telescope}
	\E \left[ P\left(X^L\right) \right] = \E \left[ P\left(X^0\right) \right] + \sum_{\ell=1}^L \E \left[ P\left(X^\ell\right) - P\left(X^{\ell-1}\right) \right]
\end{equation}
holds trivially.  Denoting the variance of the $\ell^\textrm{th}$ term on the right side by $V_\ell$, strong convergence at rate $r$ implies that $V_\ell \sim \Delta t_\ell^{2r}$ when $X^\ell$ and $X^{\ell-1}$ are sampled using the same underlying Brownian path.  

One can derive the number number of samples necessary to achieve the desired accuracy $\varepsilon$ with minimal complexity.  We quote the result without proof, and refer the reader interested in more depth to \cite{giles2008multilevel}:
\begin{equation} \label{optnumsamps}
	N_\ell = \left\lceil \frac{2}{\varepsilon^2} \sqrt{V_\ell \Delta t_\ell} \sum_{m=0}^L \sqrt{\frac{V_m}{\Delta t_m}} \right\rceil.
\end{equation}
This choice of $N_\ell$ bounds the mean squared sampling error by $\varepsilon^2/2$.  One then begins with $L=1$ and increments it until
\begin{equation}
	\norm{ \frac{1}{N_L} \sum_{i=1}^{N_L} \left[ P\left(X^{L,i}\right) - P\left(X^{L-1,i}\right) \right] }^2 \leq \frac{\varepsilon^2}{2},
\end{equation}
thereby placing the same (approximate) bound on the mean-squared time-stepping error\footnote{This is a simpler criterion than the more conservative approach used in \cite{giles2008multilevel}.  Our scheme is not substantially changed by this choice, and we find this simpler criterion sufficient for convergence in all our numerical experiments.}.    

The total complexity of the method is
\begin{equation} \label{complexity}
	\kappa \sim \sum_{\ell=0}^L \frac{N_\ell}{\Delta t_\ell} \cong \frac{2}{\varepsilon^2} \left( \sum_{m=0}^L \sqrt{\frac{V_m}{\Delta t_m}} \right)^2 \sim \left\{
	\begin{array}{lr}
		\varepsilon^{-2} (\log \varepsilon)^2 & r = 1/2 \\ 
		\varepsilon^{-2} & r > 1/2
	\end{array}
	\right.
\end{equation}
Thus, MLMC scales better than standard Monte Carlo no matter what discretization is used for each, so long as a strong converge rate of at least 1/2 is maintained.  

\subsection{Existing Monte Carlo schemes for McKean-Vlasov processes}
Numerical solution of (\ref{MV}) is complicated by the fact that we do not have advance knowledge of $\E [R]$.  We are thus forced to approximate it by a sample mean - i.e. to sample $N$ particles and evolve according to
\begin{equation} \label{finiteNagain}
	dX^i_t =  \alpha \left( X^i_t, t, \frac{1}{N} \sum_{j=1}^{N} R(X^j_t) \right) dt + \beta \left( X^i_t, t, \frac{1}{N} \sum_{j=1}^{N} R(X^j_t) \right) dW^i_t.
\end{equation}
Even though (\ref{finiteNagain}) is formally identical to (\ref{finiteN}) with $N_p$ replaced by $N$, McKean's results ensure we are justified in regarding each $X_t^i$ as an approximate sample from the $d$-dimensional probability density corresponding to (\ref{MV}), rather than the vector of all the $X_t^i$ being a single sample of an $Nd$-dimensional random variable.  As such, we may regard 
\begin{equation}
	\widehat{P} \coloneqq \frac{1}{N} \sum_{i=1}^N P\left( X^i_t \right)
\end{equation}
as an estimate of $\E [P(X_t)]$ under the evolution (\ref{MV}).  

Of course, we generally must introduce a time-discretization for (\ref{finiteNagain}), so in practice we evolve
\begin{equation} \label{discreteSLMV}
	X^i_{n+1} = X^i_n + \alpha \left( X^i_n, t_n, \frac{1}{N} \sum_{j=1}^{N} R(X^j_n) \right) \Delta t + \beta \left( X^i_n, t_n, \frac{1}{N} \sum_{j=1}^{N} R(X^j_n) \right) \Delta W^i_n.  
\end{equation}
This scheme was introduced and partially analyzed in \cite{ogawa1992monte,ogawa1994monte}.  This work concerned strong convergence, and we will make use of its results.  In \cite{bossy1997stochastic,bossy1996convergence}, weak convergence was discussed, and \cite{antonelli2002rate} builds on that work to show that the cumulative distribution function for samples of (\ref{discreteSLMV}) approximates that of (\ref{MV}) to order $\Delta t + 1/\sqrt{N}$ in the $L^1$ norm.  This result, as well as those in \cite{bossy1997stochastic,bossy1996convergence}, is limited to $d=1$, where it roughly corresponds to a weak convergence rate.  In arbitrary dimension, the overall complexity of the algorithm is either $O(\varepsilon^{-3})$ - if $O(\Delta t)$ weak convergence holds generally - or $O(\varepsilon^{-4})$ if not.  

The dimensionality restriction in existing weak convergence results means they are not useful here.  However, for MLMC, the weak convergence rate is of little importance to the scheme's efficiency. Indeed, (\ref{complexity}) shows that improved weak convergence rates may only affect the constants in the complexity, not the scaling itself.  As a result, we focus only on strong convergence rate, which implies weak convergence at the same rate.  

\section{Description of Multilevel Algorithm}
The central result of this paper is a multilevel Monte Carlo algorithm for fast solution of the following problem: given that $X_t \in \R^d$ satisfies
\begin{equation}
	dX_t = a(X_t,t,\E R(X_t)) dt + b(X_t,t,\E R(X_t)) dW_t, \qquad X_0 = \xi
\end{equation}
for $t \in [0,T]$, with $\xi$ a random variable with some known distribution, estimate 
\begin{equation}
	\bar{P}(t) \coloneqq \E \left[ P(X_t) \right].
\end{equation}
As before, $P: \R^d \rightarrow \R^\eta$ is a pre-specified functional.  In the following, we will omit the explicit $t$ dependence of $\alpha$ and $\beta$ for brevity.

Aside from the presence of the mean field, note that we have changed our notion of payoff function to allow vector-valued output.  This does not complicate the analysis, but is useful for applications.  When discussing variances of vector-valued functions, we will denote
\begin{equation}
	\textrm{Var}[P(X)] = \max_{k} \textrm{Var} \left[ P_k(X) \right].
\end{equation}
In this way, we ensure that the required error tolerance is met in every component of $P$.  

\subsection{Algorithm Description}
We begin by establishing some notation.  Let $\Delta t_\ell = \Delta t_0 2^{-\ell}$, where $\ell = 0, 1, ..., L$.  $X^{\ell,i}_n$, the $i^\textrm{th}$ sample of the approximate solution using time-step $\Delta t_\ell$ at time $t^\ell_n \coloneqq n \Delta t_\ell$.  The multilevel approximations of $\E [R]$ and $\E [P]$ at level $\ell$, time $t^\ell_n$, $\widehat{R}^\ell_n$ and $\widehat{P}^\ell_n$, respectively.  We abbreviate $R(X^{\ell,i}_n)$ to $R^{\ell,i}_n$ and similarly for $P^{\ell,i}_n$.  We extend these definitions to non-integer multiples of $\Delta t_\ell$ by linear interpolation.  That is, 
\begin{equation}
	\widehat{R}^\ell_s = \left( s  - \lfloor s \rfloor \right) \widehat{R}^\ell_{\lceil s \rceil} + \left( 1 - s + \lfloor s \rfloor \right) \widehat{R}^\ell_{\lfloor s \rfloor}
\end{equation}
for any $s \in [0,T/\Delta t_\ell]$, and similarly for $\widehat{P}^\ell_s$, $R^{\ell,i}_s$, and so forth.

Because in this method we require estimates of $\E[R]$ at every time step of every level, writing a direct analogue of the telescoping sum (\ref{telescope}) is notationally cumbersome.  Instead, we describe an iterative procedure for moving from level $\ell-1$ to $\ell$.  We must begin at $\ell=0$, where we use the standard single level scheme (\ref{discreteSLMV}) with $N_0$ samples.  In our notation, 
\begin{equation} \label{lev0}
\begin{split}
	X^{0,i}_{n+1} &= X^{0,i}_n + \alpha \left( X^{0,i}_n, \widehat{R}^0_n \right) \Delta t_0 + \beta \left( X^{0,i}_n, \widehat{R}^0_n \right) \Delta W^{0,i}_n, \\
	\widehat{R}^0_n &= \frac{1}{N_0} \sum_{j=1}^{N_0} R^{0,j}_n.
\end{split}
\end{equation}
Our level zero estimate of $\E [P]$ is just
\begin{equation} \label{Plev0}
	\widehat{P}^0_n = \frac{1}{N_0} \sum_{j=1}^{N_0} P^{0,j}_n.
\end{equation}

To update our estimates from level $\ell-1$ to $\ell$, we set
\begin{equation} \label{update}
	\widehat{R}^\ell_n = \widehat{R}^{\ell-1}_{n/2} + \frac{1}{N_\ell} \sum_{j=1}^{N_\ell} \left[ R^{\ell,j}_n - \tilde{R}^{\ell,j}_{n/2} \right], \qquad \widehat{P}^\ell_n = \widehat{P}^{\ell-1}_{n/2} + \frac{1}{N_\ell} \sum_{j=1}^{N_\ell} \left[ P^{\ell,j}_n - \tilde{P}^{\ell,j}_{n/2} \right],
\end{equation}
where $\tilde{R}^{\ell,j}_n$ and $\tilde{P}^{\ell,j}_n$ are evaluations of $R$ and $P$ at a new process $\tilde{X}^{ell,j}_n$.  

The core insight of the algorithm is in the definition of $\tilde{X}^{\ell,j}_n$.  There are two requirements of this process.  First, the updated estimates in (\ref{update}) must have the correct expectations - $\E[\widehat{R}^\ell_n] = \E[R^{\ell,j}_n]$, and similarly for $\widehat{P}^\ell_n$ - in order for the method to be accurate.  Second, the correction terms - $(R^{\ell,j}_n - \tilde{R}^{\ell,j}_{n/2})$ and similarly for $P$ - must have small variance in order for the method to be efficient.  

To accomplish the first goal, the $\tilde{X}^{\ell,j}_n$ should be identically distributed to the $X^{\ell-1,j}_n$. To accomplish the second, they must be correlated with the $X^{\ell,j}_n$.  We achieve both using the following construction: $X^{\ell,i}_0 = \tilde{X}^{\ell,i}_0$ for all $i$, and 
\begin{equation} \label{levl}
\begin{split}
	X^{\ell,i}_{n+1} &= X^{\ell,i}_n + \alpha \left( X^{\ell,i}_n, \widehat{R}^\ell_n \right) \Delta t_\ell + \beta \left( X^{\ell,i}_n, \widehat{R}^\ell_n \right) \Delta W^{\ell,i}_n, \\
	\tilde{X}^{\ell,i}_{n+1} &= \tilde{X}^{\ell,i}_n + \alpha \left( \tilde{X}^{\ell,i}_n, \widehat{R}^{\ell-1}_n \right) \Delta t_{\ell-1} + \beta \left( \tilde{X}^{\ell,i}_n, \widehat{R}^{\ell-1}_n \right) \Delta \tilde{W}^{\ell,i}_n.
\end{split}
\end{equation}
Here, $\Delta \tilde{W}^{\ell,i}_n = \Delta W^{\ell,i}_{2n} + \Delta W^{\ell,i}_{2n+1}$ is the coarsened version of the level $\ell$ Brownian path, as in standard MLMC.

Since they are each evolved using time-step $\Delta t_{\ell-1}$ and mean field $\widehat{R}^{\ell-1}_n$, $\tilde{X}^{\ell,i}_n$ and $X^{\ell-1,i}_n$ are identically distributed, as required.  Further, use of the same initial data and underlying Brownian path for $X^{\ell,i}$ and $\tilde{X}^{\ell,i}$ leads one to expect they are well correlated, as in standard MLMC.  This expectation is confirmed in section 4.  

Note that none of the samples are independent, which complicates variance estimation.  In this case, 
\begin{equation}
\begin{split}
	\textrm{Var} \left[\widehat{P}^\ell_n \right] &= \textrm{Var} \left[\widehat{P}^{\ell-1}_{n/2} \right] + \frac{1}{N_\ell} \textrm{Var} \left[P^{\ell,i}_n - \tilde{P}^{\ell,i}_{n/2} \right] \\
	 &+ 2 \textrm{Cov} \left[ \widehat{P}^{\ell-1}_{n/2}, \left( P^{\ell,i}_n - \tilde{P}^{\ell,i}_{n/2}\right) \right] \\
	 &+ \left( \frac{N_\ell - 1}{N_\ell} \right)^2 \textrm{Cov} \left[ \left( P^{\ell,i}_n - \tilde{P}^{\ell,i}_{n/2}\right), \left( P^{\ell,j}_n - \tilde{P}^{\ell,j}_{n/2}\right) \right]
\end{split}
\end{equation}
and similarly for $\widehat{R}^\ell_n$.  Because of the correlation between $X^{\ell,i}_n$ and $\tilde{X}^{\ell,i}_{n/2}$, we expect the second variance on the right to be small.  In the absence of the mean field, the covariance terms are identically zero, but that is not the case here since the different samples and levels are coupled to each other through the $\widehat{R}$'s.  

Fortunately, though, propagation of chaos results strongly suggest that the distinct samples and levels tend toward independence as the number of samples grows large.  This is because the limiting process (\ref{MV}) \textit{does} have independent samples, and for large $N$ our process closely approximates this limit.  We therefore feel justified in approximating
\begin{equation}
	\max_n \textrm{Var} \left[\widehat{P}^L_n \right] \approx \sum_{\ell=0}^L \frac{V_\ell}{N_\ell},
\end{equation}
where $V_\ell \coloneqq \max_n \textrm{Var} \left[ P^{\ell,i}_n - \tilde{P}^{\ell,i}_{n/2} \right]$.  Note that for standard MLMC, this is an identity.  This intuition will be made precise in a restricted version of the mean field case by our convergence theory.    

\subsection{Algorithm Outline}
The algorithm roughly follows that outlined in \cite{giles2008multilevel}, with a few additional subtleties introduced by the mean field.  Let us briefly summarize the main differences and their origins.  

In executing standard MLMC, the following situation frequently arises: One has already computed $N_\ell^\textrm{old}$ samples at level $\ell$.  Then, after incrementing $L$, the sum in (\ref{optnumsamps}) grows, so that the total sample number needed at level $\ell$ is $N_\ell^\textrm{new} > N_\ell^\textrm{old}$.  For SDEs, this is not problematic.  All samples are independent, so one simply generates $(N_\ell^\textrm{new} - N_\ell^\textrm{old})$ additional samples and uses them to update the relevant quantities. 

However, in the current context, the new samples will change the value of $\widehat{R}^\ell_n$, so that we now have two sets of samples at level $\ell$ which use slightly different mean fields.  When we then endeavor to evolve $\tilde{X}^{\ell+1}_n$, it is unclear what value of the mean field to use in order to ensure that the $\tilde{X}^{\ell+1}_n$ and $X^\ell_n$ are identically distributed.  

Our algorithm attempts to avoid this difficulty by preventing $N_\ell$ from growing as $L$ increases.  We do this by tracking $L^\textrm{est}$, a prediction of the final value of $L$, and the variances $V_\ell$ we expect to encounter at higher levels that have yet to be sampled.  In this way, once we sample level $\ell$, we are unlikely to need to resample it. 

We present an outline of the algorithm first, then discuss these subtleties in more depth as they pertain to each step in the outline.  We define $\epsilon_\ell = \max_n \| \widehat{P}^\ell_n - \widehat{P}^{\ell-1}_{n/2} \|_{\infty}$, and the algorithm proceeds as follows:
\begin{Algorithm}[Mean-Field Multilevel Monte Carlo] \hspace{8em}
\begin{enumerate} 
	\item Fix an error tolerance $\varepsilon$ and set $L=1$.  
	\item Choose an initial time step $\Delta t_0$ and number of samples $N_0^i$ and $N_1^i$.  
	\item Compute $\widehat{P}^0_n$, $\widehat{R}^0_n$, and $V_0$ using (\ref{lev0})-(\ref{Plev0}) with the specified $\Delta t_0$ and $N_0^i$ samples.  Then compute the analogous quantities at level 1 using (\ref{update})-(\ref{levl}) and $N_1^i$ samples.
	\item Estimate the number of necessary levels $L^{\textrm{est}}$ using
	\begin{equation} \label{Lest0}
		L^{\textrm{est}} = \left\lceil 2\log_2 \left( \epsilon_1 / \varepsilon\right) + 2 \right\rceil.
	\end{equation}
	\item Compute $N_0$ and $N_1$ using (\ref{optnumsamps}), but with $L$ replaced by $L^\textrm{est}$, and assume
	\begin{equation} \label{Vassump}
		\frac{V_\ell}{\Delta t_\ell} = \frac{V_1}{\Delta t_1}
	\end{equation}
	for each $2 \leq \ell \leq L^\textrm{est}$.  
	\item If $N_0 > N_0^i$ or $N_1 > N_1^i$, repeat steps (3)-(5) with $N_0^i \rightarrow N_0$, $N_1^i \rightarrow N_1$.  
	\item While $\epsilon_L > \varepsilon(1 - 1/\sqrt{2})$, iterate the following:
	\begin{enumerate}
		\item Increment $L$.  
		\item Set $N_L = N_{L-1} / 2$.  
		\item Compute $\widehat{P}^L_n$, $\widehat{R}^L_n$, and $V_L$ using (\ref{update})-(\ref{levl}) with $N_L$ samples.  
		\item Set $L^\textrm{est}$ via 
		\begin{equation} \label{Lestupdate}
			L^{\textrm{est}} = L + 1 + \left\lceil 2\log_2 \left( \epsilon_L / \varepsilon \right) \right\rceil.
		\end{equation}¥
		\item Set $N_\ell$, $\ell = L, ..., L^\textrm{est}$ according to (\ref{optnumsamps}), again replacing $L$ with $L^\textrm{est}$ and assuming
		\begin{equation} \label{Vassumpupdate}
			\frac{V_\ell}{\Delta t_\ell} = \frac{V_L}{\Delta t_L}
		\end{equation}
		for all $\ell$ satisfying $L < \ell \leq L^\textrm{est}$.  
	\end{enumerate}
	\item Return $\widehat{P}^L_n$.  
\end{enumerate}
\end{Algorithm}

The first three steps in the outline above are standard.  In step 4, we estimate the total number of levels we expect to use, based only on knowledge of $\ell = 0, 1$.  The estimates come from our strong convergence results, which imply
\begin{equation}
	\left\| \E P(X^\ell_n) - \E P(X_{t^\ell_n}) \right\| \approx c \Delta t_\ell^{1/2} 
\end{equation}
for some unknown $c$.  Using what essentially amounts to Richardson extrapolation, it is straightforward to show that the smallest $L$ satisfying
\begin{equation}
	\left\| \E P(X^\ell_n) - \E P(X_{t^\ell_n}) \right\| \leq \frac{\varepsilon}{\sqrt{2}}
\end{equation}
is approximately given by (\ref{Lest0}).  A directly analogous computation gives (\ref{Lestupdate}).  

Step 7b) and equations (\ref{Vassump}) and (\ref{Vassumpupdate}) arise directly from our expectation that $V_\ell$ scales like $\Delta t_\ell$ - this is confirmed (up to a logarithmic factor) in the following section.  Since $N_\ell \propto \sqrt{V_\ell \Delta t_\ell}$, we expect each $N_\ell$ to be half the size of the previous one, thus justifying 7b).  Moreover, one expects each term in the sum appearing in (\ref{optnumsamps}) to be equal, so we use the last known term to estimate the remaining unknown terms.  

Inutitively, then, this algorithm yields approximations $\widehat{P}^L_n$ of $\E P(X_{t^L_n})$ satisfying
\begin{equation}
	\max_n \left( \E \left[ \left( \widehat{P}^L_n - \E P(X_{t^L_n}) \right)^2 \right] \right)^{1/2} \approx \varepsilon
\end{equation}
with computational complexity comparable to that of standard MLMC methods.  In the following section, we will see that, in fact, the complexity is increased by only a logarithmic factor as a result of the mean field.  

\section{Convergence and Complexity Results}

\subsection{The Linear Case}

We prove convergence of the multilevel scheme for McKean-Vlasov processes of the form
\begin{equation} \label{linear}
	dX_t = \left(AX_t + B\E\left[ X_t \right] \right)dt + \sigma(t) dW_t,
\end{equation}
where $A$ and $B$ constant matrices.  For $d=1$ and certain choices of $A$ and $B$, this is the Shimizu-Yamada model of muscle contraction \cite{frank2005nonlinear,shimizu1974muscular,shimizu1972phenomenological}.  A model of this type has also been used to describe target leverage ratios in finance \cite{lo2012simple}.  

The $X^{\ell,i}_n$ satisfy
\begin{equation}
	X^{\ell,i}_{n+1} = X^{\ell,i}_n + \left(A X^{\ell,i}_n + B \widehat{X}^\ell_n \right) \Delta t_\ell + \sigma(t^\ell_n) \Delta W^{\ell,i}_n,
\end{equation}
and 
\begin{equation}
	\widehat{X}^\ell_n = \widehat{X}^{\ell-1}_{n/2} + \frac{1}{N_\ell} \sum_{i=1}^{N_\ell} \left[ X^{\ell,i}_n - \tilde{X}^{\ell,i}_{n/2} \right].
\end{equation}  

\subsubsection{Strong Convergence Theory}
Following standard propagation of chaos arguments, we define the new quantity $\mathcal{X}^{\ell,i}_n$ by
\begin{equation}
	\mathcal{X}^{\ell,i}_{n+1} = \mathcal{X}^{\ell,i}_n + (A \mathcal{X}^{\ell,i}_n + B \E [\mathcal{X}^{\ell,i}_n]) \Delta t_\ell + \sigma(t^\ell_n) \Delta W^{\ell,i}_n.
\end{equation}
Simulation of $\mathcal{X}^\ell_n$ is impossible without advance knowledge of its expectation, but it has the important feature that its samples are independent.  Intuitively, one expects that $X^{\ell,i}_n \rightarrow \mathcal{X}^{\ell,i}_n$ as the $N_\ell \rightarrow \infty$.  The following theorem confirms this intuition and gives the rate of convergence:
\begin{thrm}[Convergence]
Algorithm 1 applied to (\ref{linear}) has the following bound on each sample:
\begin{equation}
	\max_{r\leq \ell} \max_{n \leq T / \Delta t_r} \E \norm{X^r_n - \mathcal{X}^r_n} \leq K' \left( \frac{1}{\sqrt{N_0}} + \sum_{m=1}^\ell \sqrt{\frac{\Delta t_m}{N_m}} \right)
\end{equation}
for some positive constant $K'$ that is independent of $\ell$.  
\end{thrm}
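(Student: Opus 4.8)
The plan is to proceed by induction on the level $\ell$, tracking the quantity $E^\ell_n \coloneqq \E\norm{X^{\ell,i}_n - \mathcal{X}^{\ell,i}_n}$ (which, by symmetry, is independent of the sample index $i$). The base case $\ell=0$ is exactly the single-level propagation-of-chaos estimate for the scheme (\ref{discreteSLMV}) specialized to the linear drift $AX + B\E X$; by the results of \cite{ogawa1992monte,ogawa1994monte} this gives $\max_n E^0_n \le C/\sqrt{N_0}$, which is the first term in the claimed bound. For the inductive step, I would subtract the recursion for $\mathcal{X}^{\ell,i}_{n+1}$ from that for $X^{\ell,i}_{n+1}$, take norms and expectations, and use the Lipschitz bound $\norm{A(X-\mathcal{X})} \le \norm{A}\,\norm{X-\mathcal{X}}$ together with $\norm{B(\widehat{X}^\ell_n - \E\mathcal{X}^\ell_n)} \le \norm{B}\,\norm{\widehat{X}^\ell_n - \E\mathcal{X}^\ell_n}$ to get a discrete Gronwall inequality
\begin{equation}
	E^\ell_{n+1} \le (1 + \norm{A}\Delta t_\ell) E^\ell_n + \norm{B}\Delta t_\ell\, \E\norm{\widehat{X}^\ell_n - \E\mathcal{X}^\ell_n}.
\end{equation}
Summing and invoking $(1+\norm{A}\Delta t_\ell)^n \le e^{\norm{A}T}$ reduces everything to controlling the mean-field error $D^\ell_n \coloneqq \E\norm{\widehat{X}^\ell_n - \E\mathcal{X}^\ell_n}$ uniformly in $n$.

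The heart of the argument is the bound on $D^\ell_n$. Using the update (\ref{update}) and the telescoping structure, write
\begin{equation}
	\widehat{X}^\ell_n - \E\mathcal{X}^\ell_n = \Big(\widehat{X}^{\ell-1}_{n/2} - \E\mathcal{X}^{\ell-1}_{n/2}\Big) + \frac{1}{N_\ell}\sum_{i=1}^{N_\ell}\Big[(X^{\ell,i}_n - \mathcal{X}^{\ell,i}_n) - (\tilde{X}^{\ell,i}_{n/2} - \mathcal{X}^{\ell-1,i}_{n/2})\Big] + \frac{1}{N_\ell}\sum_{i=1}^{N_\ell}\big(\mathcal{X}^{\ell,i}_n - \E\mathcal{X}^\ell_n\big) - \frac{1}{N_\ell}\sum_{i=1}^{N_\ell}\big(\mathcal{X}^{\ell-1,i}_{n/2} - \E\mathcal{X}^{\ell-1}_{n/2}\big),
\end{equation}
using that $\mathcal{X}^{\ell-1}$ and the coarsened $\mathcal{X}^\ell$ (same law, driven by $\Delta\tilde W$) have equal expectations. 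The first bracket is $D^{\ell-1}_{n/2}$, handled by induction. The two ``fluctuation'' sums of independent, mean-zero, $O(1)$-variance terms each contribute $O(1/\sqrt{N_\ell})$ in $L^1$; crucially, if one couples $\mathcal{X}^{\ell,i}$ and $\mathcal{X}^{\ell-1,i}$ through the same Brownian increments, their difference has variance $O(\Delta t_{\ell-1})$ by standard strong order $1/2$ for Euler--Maruyama, so the combined fluctuation term is $O(\sqrt{\Delta t_\ell/N_\ell})$ — this is what produces the $\sqrt{\Delta t_m/N_m}$ terms in the theorem. The middle sum is bounded by $\frac{1}{N_\ell}\sum_i(E^\ell_n + \E\norm{\tilde{X}^{\ell,i}_{n/2} - \mathcal{X}^{\ell-1,i}_{n/2}}) = E^\ell_n + \tilde E^{\ell-1}_{n/2}$, where $\tilde E$ obeys the same single-level estimate as $E^{\ell-1}$ since $\tilde X^{\ell,i}$ and $X^{\ell-1,i}$ share the same dynamics. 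Assembling these and feeding back into the Gronwall bound for $E^\ell$ closes the induction, with the constant $K'$ absorbing $e^{(\norm{A}+\norm{B})T}$ and the geometric sum over levels.

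The main obstacle I anticipate is the circularity in the $E^\ell_n$--$D^\ell_n$ coupling: $D^\ell_n$ depends on $E^\ell_n$ (through the middle sum) and $E^\ell_{n+1}$ depends on $D^\ell_n$, so one cannot simply bound one then the other. The fix is to set up a single coupled discrete Gronwall inequality for the pair, or equivalently for $F^\ell_n \coloneqq \max_{m\le n}(E^\ell_m + D^\ell_m)$, and check that the coefficient of the ``self'' term stays of the form $1 + C\Delta t_\ell$ so that the $n$-fold iteration remains bounded by $e^{CT}$ uniformly in $\ell$ — the factor $N_\ell^{-1}(N_\ell-1) \le 1$ and the $\Delta t_\ell$ from the middle-sum/drift-difference terms are exactly what make this work. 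A secondary technical point is justifying that the fluctuation terms are genuinely $O(1/\sqrt{N_\ell})$ in $L^1$ uniformly in $n$; this follows from an $L^2$ bound (Jensen) plus uniform-in-$n$ moment bounds on $\mathcal{X}^\ell_n$, which hold because the linear SDE (\ref{linear}) has bounded moments on $[0,T]$ and Euler--Maruyama inherits them. These moment bounds and the strong order $1/2$ coupling estimate are the routine ingredients I would relegate to a lemma or cite from \cite{kloeden1992numerical,ogawa1992monte}.
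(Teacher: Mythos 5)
Your architecture is right in outline — the fluctuation term $\frac{1}{N_\ell}\sum_i\bigl(\mathcal{X}^{\ell,i}_n-\mathcal{X}^{\ell-1,i}_{n/2}\bigr)-\E\bigl[\mathcal{X}^{\ell}_n-\mathcal{X}^{\ell-1}_{n/2}\bigr]$ with per-sample variance $O(\Delta t_\ell)$ is exactly the paper's $\nu^\ell_k$ and is indeed where the $\sqrt{\Delta t_m/N_m}$ terms come from — but there is a genuine gap in how you close the level recursion, and it is precisely at the step you flag as the "main obstacle." You bound the middle sum by the triangle inequality, $\E\norm{\tfrac{1}{N_\ell}\sum_i[(X^{\ell,i}_n-\mathcal{X}^{\ell,i}_n)-(\tilde X^{\ell,i}_{n/2}-\mathcal{X}^{\ell-1,i}_{n/2})]}\le E^\ell_n+\tilde E^{\ell-1}_{n/2}$. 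This feeds the \emph{full} lower-level error into $D^\ell_n$ with an $O(1)$ coefficient (no $\Delta t_\ell$ in front). Tracing your coupled Gr\"onwall through: $E^\ell\lesssim T e^{CT}\max_n D^\ell_n$ and $\tilde E^{\ell-1}\lesssim Te^{CT}\max_n D^{\ell-1}_n$, so $\max_n D^\ell_n\le C_T\max_n D^{\ell-1}_n+K\sqrt{\Delta t_\ell/N_\ell}$ with $C_T>1$ a constant independent of $\Delta t_\ell$. Iterating over levels gives $K'\sim C_T^{\,\ell}$, which is not independent of $\ell$; since $\ell\sim\log(1/\varepsilon)$ this is a polynomial loss in $\varepsilon$ and the theorem as stated does not follow. (Your proposed fix of tracking $F^\ell_n=\max_{m\le n}(E^\ell_m+D^\ell_m)$ makes this worse: because $D^\ell_n$ contains $E^\ell_n$ with coefficient $1$, the "self" coefficient of $F$ in $n$ is not $1+C\Delta t_\ell$.)

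The missing idea is that the quantity inside your middle sum, $\delta^{\ell,i}_n\coloneqq(X^{\ell,i}_n-\mathcal{X}^{\ell,i}_n)-(\tilde X^{\ell,i}_{n/2}-\mathcal{X}^{\ell-1,i}_{n/2})$, must itself be shown to be small rather than estimated by the sum of its two halves. The paper writes both error recursions as Riemann sums and realigns the coarse-level sum $\Delta t_{\ell-1}\sum_{k=0}^{n/2-1}$ with the fine-level sum $\Delta t_\ell\sum_{k=0}^{n-1}$ via the interpolation convention; the mismatch contributes only boundary terms carrying an explicit factor $\Delta t_\ell$ multiplying the level-$(\ell-1)$ errors, plus the $\nu^\ell_k$ fluctuations. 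A Gr\"onwall in $n$ then gives $\E\norm{\delta^\ell_n}\le K\sqrt{\Delta t_\ell/N_\ell}+\tfrac{\Delta t_\ell}{2}\bigl(\epsilon^{\ell-1}+\widehat\epsilon^{\,\ell-1}\bigr)$, so the cross-level recursion becomes $\epsilon^\ell\le K\bigl(N_0^{-1/2}+\sum_m\sqrt{\Delta t_m/N_m}\bigr)+\sum_{m=1}^{\ell-1}\Delta t_m\sum_{r\le m}\epsilon^r$, and the second Gr\"onwall produces the factor $\exp\bigl(\sum_m m\Delta t_m\bigr)$, which is bounded uniformly in $\ell$ because $\Delta t_m=\Delta t_0 2^{-m}$ makes the sum convergent. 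That $\Delta t_\ell$ prefactor on the lower-level errors is the whole ballgame; without it you cannot get $K'$ independent of $\ell$.
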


\begin{proof}
See appendix A.
\end{proof}

Standard techniques show that the $\mathcal{X}^{\ell,i}_n$ converge strongly to solutions of (\ref{linear}) at rate $1/2 - \delta$ for any positive $\delta$ - see e.g.\ proposition 3 in \cite{ogawa1992monte}.  As an immediate result, we have the following corollary
\begin{coro}
The $X^{\ell,i}_n$ converge strongly to solutions of (\ref{linear}).  In particular, if $X_t^i$ is a solution of (\ref{linear}) using the same initial data and Brownian path as $X^{\ell,i}_n$, then for any $\delta > 0$
\begin{equation}
	\E \norm{ X^{\ell,i}_n - X^i_{t^\ell_n} } \leq K \left( \Delta t_\ell^{1/2-\delta} + \frac{1}{\sqrt{N_0}} + \sum_{m=1}^\ell \sqrt{\frac{\Delta t_m}{N_m}} \right)
\end{equation}
for some $K$ that is independent of $\ell$.  
\end{coro}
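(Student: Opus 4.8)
The plan is to combine Theorem 1 with the known strong convergence of the auxiliary independent-sample scheme via the triangle inequality. Fix a level $\ell$, a sample index $i$, and a time index $n \leq T/\Delta t_\ell$, and let $X^i_t$ denote the solution of (\ref{linear}) driven by the same Brownian motion $W^{\ell,i}$ and started from the same initial datum as $X^{\ell,i}_n$ (hence also the same as $\mathcal{X}^{\ell,i}_n$). Then
\begin{equation}
	\E \norm{X^{\ell,i}_n - X^i_{t^\ell_n}} \leq \E \norm{X^{\ell,i}_n - \mathcal{X}^{\ell,i}_n} + \E \norm{\mathcal{X}^{\ell,i}_n - X^i_{t^\ell_n}}.
\end{equation}
The first term on the right is controlled directly by Theorem 1, uniformly over $n$ and over all levels $r \leq \ell$, by the quantity $K'(1/\sqrt{N_0} + \sum_{m=1}^\ell \sqrt{\Delta t_m/N_m})$.

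For the second term I would invoke the standard strong-error estimate for the Euler--Maruyama discretization of a McKean--Vlasov SDE with globally Lipschitz coefficients of linear growth --- precisely the setting of (\ref{linear}), whose drift $x \mapsto Ax + B\,\E[\,\cdot\,]$ and time-dependent diffusion $\sigma(t)$ satisfy these hypotheses uniformly on $[0,T]$. Proposition 3 of \cite{ogawa1992monte}, or any of the analogous references cited in Section 2.2, furnishes, for every $\delta > 0$, a constant $C$ depending only on $A$, $B$, $\sigma$, $T$, and the law of $\xi$ --- but not on the step size --- such that $\max_{n \leq T/\Delta t_\ell} \E\norm{\mathcal{X}^{\ell,i}_n - X^i_{t^\ell_n}} \leq C\,\Delta t_\ell^{1/2 - \delta}$. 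Because $\mathcal{X}^{\ell,i}$ is the Euler--Maruyama scheme at step $\Delta t_\ell$ driven by the same increments $\Delta W^{\ell,i}_n$ and started from the same initial datum as the continuous solution $X^i$, this is a genuine strong (pathwise $L^1$) bound rather than merely a weak one, so the dimensional restrictions of the weak-convergence literature are irrelevant here. Adding the two bounds and setting $K = \max\{K', C\}$ gives the asserted inequality; taking the maximum over $n$ on the left is harmless since both right-hand bounds are already uniform in $n$.

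The only point requiring genuine care is the $\ell$-independence of $K$. For the sampling term this is built into Theorem 1, and for the discretization term one must verify that the Gronwall-type constant in the Euler--Maruyama analysis depends only on the Lipschitz and linear-growth constants of the coefficients and on $T$ --- not on $\Delta t_\ell$ nor on the number of steps $T/\Delta t_\ell$. This is standard for linear coefficients on a bounded horizon, and it is exactly where the arbitrarily small loss $\delta$ in the exponent enters, arising from the logarithmic correction to the Euler--Maruyama strong rate (equivalently, from estimating the modulus of continuity of Brownian motion). A minor bookkeeping remark: the coarsened companion $\tilde X^{\ell,i}$ plays no role in this comparison --- it is absorbed entirely into the proof of Theorem 1 --- so the corollary follows with essentially no further work once that theorem is established.
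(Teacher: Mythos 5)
Your proposal is correct and follows exactly the route the paper intends: a triangle inequality through the auxiliary process $\mathcal{X}^{\ell,i}_n$, with Theorem 1 controlling $\E\norm{X^{\ell,i}_n - \mathcal{X}^{\ell,i}_n}$ and the standard strong Euler--Maruyama rate $\Delta t_\ell^{1/2-\delta}$ for independent-sample McKean--Vlasov schemes (Proposition 3 of the cited Ogawa reference) controlling $\E\norm{\mathcal{X}^{\ell,i}_n - X^i_{t^\ell_n}}$. The paper treats this as immediate and gives no further detail, so your added remarks on the $\ell$-independence of the constants and the origin of the $\delta$ loss are a faithful elaboration rather than a departure.
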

This completes our understanding of the strong convergence of the multilevel samples in this special case.  This implies  weak convergence at the same rate.  


\subsubsection{Complexity Theory}
An additional corollary to Theorem 1 which leads directly into our complexity analysis is as follows:
\begin{coro}
For any Lipschitz payoff function $P$ and $\delta > 0$, 
\begin{equation}
	\E \norm{\widehat{P}^\ell_n - \E P(X_{t^\ell_n}) } \leq K \left( \Delta t_\ell^{1/2-\delta} + \frac{\ell}{\sqrt{N_0}} + \sum_{m=1}^\ell m \sqrt{\frac{\Delta t_m}{N_m}} \right)
\end{equation}
for some $K$ independent of $\ell$.  
\end{coro}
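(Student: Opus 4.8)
\medskip
\noindent\emph{Proof strategy.} Let $L_P$ denote the Lipschitz constant of $P$. The plan is to split the error into a deterministic time-discretization part and a statistical part, using the identity $\E[\widehat P^\ell_n]=\E[P(X^{\ell,j}_n)]$ (valid for every $j$, by induction on $\ell$ from the update rule, the equality in distribution of $\tilde X^{\ell,j}$ and $X^{\ell-1,j}$, and linearity of expectation):
\[
\widehat P^\ell_n-\E P(X_{t^\ell_n})
=\underbrace{\bigl(\E P(X^{\ell,j}_n)-\E P(X^j_{t^\ell_n})\bigr)}_{\text{bias}}
+\underbrace{\bigl(\widehat P^\ell_n-\E\widehat P^\ell_n\bigr)}_{\text{statistical error}}.
\]
The bias is handled at once: by the Lipschitz property it is at most $L_P\,\E\|X^{\ell,j}_n-X^j_{t^\ell_n}\|$, and Corollary 1 bounds the latter by a constant multiple of $\Delta t_\ell^{1/2-\delta}+N_0^{-1/2}+\sum_{m=1}^\ell\sqrt{\Delta t_m/N_m}$, which is dominated by the claimed right-hand side.

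The statistical error is the substantive part, since the level-$m$ corrections $\tfrac1{N_m}\sum_j(P^{m,j}-\tilde P^{m,j})$ that telescope into $\widehat P^\ell_n$ are mutually dependent (coupled through the shared mean-field estimates $\widehat X^0,\dots,\widehat X^\ell$) and the samples within a level are dependent as well, so variances cannot simply be added. To get around this I would compare with the fictitious, fully decoupled multilevel estimator $\widehat{\mathcal P}^\ell_n$ obtained from the telescoping form of $\widehat P^\ell_n$ by replacing each $X^{m,j}$ and $\tilde X^{m,j}$ with the corresponding independent process $\mathcal X^{m,j}$, and bound
\[
\E\bigl\|\widehat P^\ell_n-\E\widehat P^\ell_n\bigr\|
\le \E\bigl\|\widehat{\mathcal P}^\ell_n-\E\widehat{\mathcal P}^\ell_n\bigr\|
+2\,\E\bigl\|\widehat P^\ell_n-\widehat{\mathcal P}^\ell_n\bigr\|.
\]
For the first term the correction blocks across levels and across samples are now genuinely independent and mean-zero, so it is at most a constant times $(\operatorname{Var}[\widehat{\mathcal P}^\ell_n])^{1/2}\lesssim\bigl(N_0^{-1}+\sum_{m=1}^\ell\Delta t_m^{1-2\delta}/N_m\bigr)^{1/2}$, using strong convergence of the $\mathcal X$-scheme to control $\operatorname{Var}[P(\mathcal X^m)-P(\tilde{\mathcal X}^m)]\lesssim\Delta t_m^{1-2\delta}$; subadditivity of the square root renders this term harmless relative to the claimed bound. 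For the second term I would use no independence at all: expanding the telescope and applying the Lipschitz bound on $P$ gives, termwise, $\E\|\widehat P^\ell_n-\widehat{\mathcal P}^\ell_n\|\le L_P\bigl(\E\|X^{0,j}-\mathcal X^{0,j}\|+2\sum_{m=1}^\ell\E\|X^{m,j}-\mathcal X^{m,j}\|\bigr)$ (the auxiliary $\tilde X$-differences being bounded analogously), and Theorem 1 bounds each of these by $K'\bigl(N_0^{-1/2}+\sum_{p=1}^m\sqrt{\Delta t_p/N_p}\bigr)$. Summing these nested estimates over the $\ell$ levels of the telescope — each level-$m$ correction carrying a coupling error that has itself accumulated a contribution from every coarser level $p\le m$ — produces the weighted sums $\ell N_0^{-1/2}$ and $\sum_{m=1}^\ell m\sqrt{\Delta t_m/N_m}$ of the statement. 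Collecting the bias and the two statistical pieces finishes the proof.

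I expect the genuine obstacle to be the second term above. It is where one pays for the entanglement of distinct levels and distinct samples — the unavoidable byproduct of coupling every level to all coarser ones — and the whole argument rests on the propagation-of-chaos estimate of Theorem 1, which measures how far the truly coupled samples are from the fictitious independent ones. Granting Theorem 1, what remains is the elementary but slightly delicate accounting of how the per-level coupling errors compound through the nested telescoping sum, together with the routine absorption of the $\delta$-loss coming from the strong rate $1/2-\delta$.
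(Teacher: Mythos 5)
Your proposal follows essentially the same route as the paper: both compare $\widehat{P}^\ell_n$ with the fictitious decoupled estimator $\widehat{\mathcal{P}}^\ell_n$, bound the coupling error $\E\|\widehat{P}^\ell_n-\widehat{\mathcal{P}}^\ell_n\|$ by summing Theorem 1 over the $\ell$ levels of the telescope (which is exactly where the factor $m$ and the $\ell/\sqrt{N_0}$ term originate), control the fluctuation of $\widehat{\mathcal{P}}^\ell_n$ by genuine independence of its correction blocks, and absorb the bias via strong convergence; the paper merely organizes the triangle inequality as $(\widehat{P}^\ell_n-\widehat{\mathcal{P}}^\ell_n)+(\widehat{\mathcal{P}}^\ell_n-\E\widehat{\mathcal{P}}^\ell_n)+(\E\widehat{\mathcal{P}}^\ell_n-\E P(X_{t^\ell_n}))$ instead of splitting off the bias first. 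One small correction: the fine--coarse variance of the decoupled pair should be taken as $O(\Delta t_m)$, as in the paper, rather than $O(\Delta t_m^{1-2\delta})$ --- the $\delta$-loss pertains only to convergence to the time-continuous limit, and with the exponent $1-2\delta$ the resulting sum $\sum_m \Delta t_m^{-\delta}\sqrt{\Delta t_m/N_m}$ is not literally dominated by $\sum_m m\sqrt{\Delta t_m/N_m}$.
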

\begin{proof}
We first define 
\begin{equation}
	\widehat{\mathcal{P}}^\ell_n = \widehat{\mathcal{P}}^{\ell-1}_{n/2} + \sum_{i=1}^{N_\ell} \left[ P(\mathcal{X}^{\ell,i}_n) - P(\tilde{\mathcal{X}}^{\ell,i}_{n/2}) \right],
\end{equation}
where the definition of $\tilde{\mathcal{X}}^{\ell,i}_n$ is analogous to that of $\tilde{X}^{\ell,i}_n$.  We have  
\begin{equation}
	\E \norm{\widehat{P}^\ell_n - \E P(X_{t^\ell_n}) } \leq \E \norm{ \widehat{P}^\ell_n - \widehat{\mathcal{P}}^\ell_n} + \E \norm{ \widehat{\mathcal{P}}^\ell_n - \E \widehat{\mathcal{P}}^\ell_n } + \norm{ \E \widehat{\mathcal{P}}^\ell_n - \E P(X_{t^\ell_n})}.  
\end{equation}
We bound each of the three terms on the right in turn.  First, Theorem 1 and summation up to $\ell$ immediately imply
\begin{equation}
	 \E \norm{ \widehat{P}^\ell_n - \widehat{\mathcal{P}}^\ell_n} \leq K \left( \frac{\ell}{\sqrt{N_0}} + \sum_{m=1}^\ell m \sqrt{\frac{\Delta t_m}{N_m}} \right).
\end{equation}
Since $\E \widehat{\mathcal{P}}^\ell_n = \E P(\mathcal{X}^\ell_n)$ by construction, the independence and strong convergence of the $\mathcal{X}^{\ell,i}_n$ imply
\begin{equation}
	\E \norm{ \widehat{\mathcal{P}}^\ell_n - \E \widehat{\mathcal{P}}^\ell_n } \leq K \left( \frac{1}{N_0} + \sum_{m=1}^\ell \frac{\Delta t_m}{N_m} \right)^{1/2}, \qquad \norm{ \E \widehat{\mathcal{P}}^\ell_n - \E P(X_{t^\ell_n})} \leq K \sqrt{\Delta t_\ell}.
\end{equation}
Combining these three bounds and using the sub-additivity of the square root function gives the desired result.  
\end{proof}

\begin{thrm}[Complexity]
For the MLMC algorithm (\ref{lev0})-(\ref{Plevl}) applied to (\ref{linear}), there exist choices of $L$ and $N_\ell$ depending on $\varepsilon$ such that 
\begin{equation} \label{accbnd}
	\E \norm{\widehat{P}^L_n - \E P(X_{t^L_n}) } \leq \varepsilon
\end{equation}
and the computational complexity $\kappa$ satisfies
\begin{equation}
	\kappa = O \left( \varepsilon^{-2} |\log \varepsilon|^5 \right).
\end{equation}
\end{thrm}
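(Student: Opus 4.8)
\emph{Proof strategy.} The plan is to feed the pathwise‑plus‑statistical error estimate of Corollary~2 into the Giles optimization of the top level $L$ and the sample counts $N_\ell$, the only genuinely new ingredient being the extra factors of $m$ inside the sum there. Fix $\delta>0$ and write $K$ for the constant of Corollary~2; we want to choose $L$ and $N_0,\dots,N_L$ so that each of the three terms on the right of Corollary~2, evaluated at $\ell=L$, is at most $\varepsilon/3$. The bias term $K\Delta t_L^{1/2-\delta}$ pins down the number of levels: because $\Delta t_\ell=\Delta t_0 2^{-\ell}$, it is $\le\varepsilon/3$ once $L\ge\frac{2}{1-2\delta}\log_2(1/\varepsilon)+O(1)$, so we take $L=\Theta(|\log\varepsilon|)$ and record $\Delta t_L^{-1}=O\!\left(\varepsilon^{-2/(1-2\delta)}\right)$.

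With $L$ fixed I would choose the $N_\ell$ to enforce $KL/\sqrt{N_0}\le\varepsilon/3$ and $K\sum_{m=1}^{L}m\sqrt{\Delta t_m/N_m}\le\varepsilon/3$ while keeping the work $\kappa=\sum_{\ell=0}^{L}N_\ell/\Delta t_\ell$ small. A Lagrange‑multiplier optimization as in \cite{giles2008multilevel}, adapted to the present error budget $\sum_m m\sqrt{\Delta t_m/N_m}$, gives the optimizer $N_\ell\propto\varepsilon^{-2}L^{10/3}\ell^{2/3}\Delta t_\ell$ (for $\ell\ge 1$); for the proof, however, the cruder choice
\begin{equation}
	N_\ell=\left\lceil\, C\varepsilon^{-2}L^{4}\Delta t_\ell \,\right\rceil
\end{equation}
already works, since then $\sum_{m=1}^{L}m\sqrt{\Delta t_m/N_m}\le\frac{\varepsilon}{\sqrt{C}\,L^{2}}\sum_{m=1}^{L}m\le\varepsilon/\sqrt{C}$ and $L/\sqrt{N_0}=O\!\left(\varepsilon/(L\sqrt{C})\right)$, so both statistical inequalities hold once $C$ is a large enough constant depending only on $K$ and $\Delta t_0$.

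It then remains to estimate the cost. Whenever $C\varepsilon^{-2}L^{4}\Delta t_\ell\ge1$ the ceiling at most doubles the summand, so $N_\ell/\Delta t_\ell\le 2C\varepsilon^{-2}L^{4}$, and summing over the $L+1$ levels,
\begin{equation}
	\kappa\;\le\;2C\varepsilon^{-2}L^{4}(L+1)\;+\!\!\!\sum_{\ell:\;C\varepsilon^{-2}L^{4}\Delta t_\ell<1}\!\!\!\Delta t_\ell^{-1}\;=\;O\!\left(\varepsilon^{-2}L^{5}\right)+O\!\left(\Delta t_L^{-1}\right),
\end{equation}
the residual sum being geometric and dominated by its largest term. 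With $L=\Theta(|\log\varepsilon|)$ the first term is $O(\varepsilon^{-2}|\log\varepsilon|^{5})$; taking $\delta$ small — and, to make the power of $\varepsilon$ exactly $2$, letting $\delta\asymp 1/|\log\varepsilon|$, so that $\Delta t_L^{-1}=O(\varepsilon^{-2})$ and in fact no level needs rounding up — yields $\kappa=O(\varepsilon^{-2}|\log\varepsilon|^{5})$, as claimed. The exponent $5$ on the logarithm arises from cubing $\sum_{m\le L}m^{2/3}\asymp L^{5/3}$ in the cost, the factors of $m$ being Corollary~2's record of the mean‑field coupling.

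The substantive difficulty, I expect, is not this optimization but the edge bookkeeping it glosses over: the rounding of the $N_\ell$ at the finest levels, and the loss from strong order $\tfrac12-\delta$ rather than $\tfrac12$, which together require tracking how $\Delta t_L^{-1}$ and $K=K(\delta)$ depend on $\varepsilon$ (here the noise is additive, so one could alternatively invoke the genuine $\Delta t$‑order and dispense with $\delta$ altogether). A full proof therefore amounts to executing the argument above while carefully controlling these two corrections.
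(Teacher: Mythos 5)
Your proposal is correct and follows essentially the same route as the paper: both feed Corollary~2 into a Giles-style allocation with $L=\Theta(|\log\varepsilon|)$ and $N_\ell\propto\varepsilon^{-2}L^4\Delta t_\ell$, and both obtain $\kappa\sim\varepsilon^{-2}L^5$ by summing $N_\ell/\Delta t_\ell$ over the $L+1$ levels. If anything you are more careful than the paper, which silently ignores the ceiling at the finest levels and the $\varepsilon^{-O(\delta)}$ loss from the strong order $1/2-\delta$ — the two corrections you explicitly flag and resolve.
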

\begin{proof}
Recalling Corollary 2, we simply choose $L$ and $N_\ell$ so that the time-stepping and sampling errors are each bounded by $\varepsilon / 2$.  Namely, 
\begin{equation}
\begin{split}
	L &= \frac{2}{1-\delta} \log_2 \left( 2K\sqrt{\Delta t_0} / \varepsilon \right),\\
	N_0 &= \left\lceil 4K L^2 (L+1)^2 \varepsilon^{-2} \right\rceil, \\
	N_\ell &= \left\lceil 4K \Delta t_\ell L^2 (L+1)^2 \varepsilon^{-2} \right\rceil \quad \textrm{for } \ell \geq 1.
\end{split}
\end{equation}
This clearly implies (\ref{accbnd}).  Moreover, 
\begin{equation}
	\kappa \sim \sum_{\ell=0}^L \frac{N_\ell}{\Delta t_\ell} \sim \varepsilon^{-2} L^3 (L+1)^2 \sim \varepsilon^{-2} | \log \varepsilon |^5,
\end{equation}
thus completing the proof.  
\end{proof}
Evidently, the presence of the mean field only increases the complexity of MLMC by an additional logarithmic factor.  It bears noting that this proof is not constructive - without advance knowledge of $K$, it gives no information about how to choose $L$ and only tells us about the relative sizes of the $N_\ell$.  The choices specified in 3.2 are motivated by analogy to the standard MLMC method, and are found to be effective in numerical experiments.  


\subsection{Partial results toward a more general theory}

Of interest is the extension to more general classes of McKean-Vlasov processes than (\ref{linear}).  While a complete theory is work in progress, we present partial results.  Our numerical results provide strong evidence for the convergence and complexity of the scheme in quite general contexts.  

We begin by reprinting for clarity the type of process we work with:
\begin{equation} \label{McKeanVlasov}
	dX_t = \alpha\left(X_t,t,\E R(X_t) \right) dt + \beta\left(X_t,t,\E R(X_t) \right) dW_t, \qquad X_0 = \xi,
\end{equation}
where $\xi$ is a random variable with some known density.  A first desirable result is that $X^{\ell,i}_n$ and $\tilde{X}^{\ell,i}_{n/2}$ should be nearby in a strong sense, so that the variance of the level differences is small.  We work under the following mild assumptions:
\begin{assumption}
	Both $\alpha$ and $\beta$ have uniform Lipschitz bounds in each of their arguments.  
\end{assumption}
\begin{assumption}
	Both $\alpha$ and $\beta$ have uniformly bounded expectations.  That is, for any time interval $t \in [0,T]$ and $p \geq 1$, there exists a constant $K_{T,p}>0$ such that
	\begin{equation}
		\E \left[ \norm{ \alpha\left(X,t,\E R(X(t)) \right) }^p \right] \leq K_{T,p}
	\end{equation}
	and similarly for $\beta$, for every $p \in [1,p_{\textrm{max}}]$, where $p_\textrm{max} \geq 2$.  
\end{assumption}
We then have the following theorem, which demonstrates that at the time steps shared by levels $\ell$ and $\ell-1$, the approximations at the two levels are close in the strong sense.
\begin{thrm}
	Under assumptions 1 and 2, the multilevel scheme for (\ref{McKeanVlasov}) satisfies
	\begin{equation}
		\E \left[ \max_{k\leq n} \norm{X^{\ell}_k - \tilde{X}^\ell_{k/2} }^p \right] = \left\{
		\begin{array}{lcc}
		O\left(\sqrt{\Delta t_\ell}\right) & : & p=1 \\
		O\left( \Delta t_\ell \left| \log \Delta t_\ell \right| \right) & : & p=2 \\
		O\left(\Delta t_\ell \right) & : & 2<p \leq p_\textrm{max}
	\end{array} \right.
	\end{equation}
	for every even $n$ satisfying $n \leq T / \Delta t_{\ell}$, and with the maximum taken only over even $k$.  
\end{thrm}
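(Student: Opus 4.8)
The plan is to estimate the process $\delta^{\ell,i}_k := X^{\ell,i}_{2k} - \tilde X^{\ell,i}_k$ — the difference between the fine-level process (sampled at the coarse time points) and the coarse companion process — by a discrete Gronwall-type argument applied to its $p$-th moment, with the error splitting into three familiar pieces: (i) the difference between the Euler increments caused by using $\Delta t_\ell$ with two substeps versus $\Delta t_{\ell-1}$ directly (a one-step-error term, which is where the $\sqrt{\Delta t_\ell}$ for $p=1$ and the $\Delta t_\ell|\log\Delta t_\ell|$ for $p=2$ enter), (ii) the Lipschitz-in-state contribution $\alpha(X^{\ell,i}_\cdot,\cdot)-\alpha(\tilde X^{\ell,i}_\cdot,\cdot)$, controlled by $\|\delta^{\ell,i}\|$ itself, and (iii) the Lipschitz-in-mean-field contribution coming from $\widehat R^\ell_n - \widehat R^{\ell-1}_n$. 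The decisive structural fact is that by construction $\widehat R^\ell_n - \widehat R^{\ell-1}_n = \frac{1}{N_\ell}\sum_j (R^{\ell,j}_n - \tilde R^{\ell,j}_{n/2})$, so the mean-field discrepancy at level $\ell$ is \emph{itself} an average of level differences — i.e. term (iii) feeds back into the same quantity we are bounding, closing the loop and keeping the estimate self-contained at level $\ell$ without any appeal to lower levels.

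First I would write the telescoping identity for $\delta^{\ell,i}_k$ over the $k$ coarse steps, take norms, raise to the $p$-th power, and apply the maximal inequality together with the Burkholder–Davis–Gundy inequality to the martingale part arising from the Brownian increments; the drift part is handled by Jensen/Hölder in time. This produces a bound of the form $\E\max_{j\le k}\|\delta^{\ell,i}_j\|^p \lesssim (\text{one-step error})^p + \Delta t_{\ell-1}\sum_{j<k}\E\max_{i\le j}\|\delta^{\ell,i}_i\|^p + (\text{mean-field term})$. For the one-step error I would use Assumption 2 (uniform $L^p$ bounds on $\alpha,\beta$) to show the discrepancy between a single coarse Euler step and two fine Euler steps is, in $L^p$, of order $\Delta t_\ell$ in the drift part and of order $\Delta t_\ell$ in the diffusion part after the $\Delta\tilde W$ cancellation — the residual being $\beta(X^{\ell}_{2n+1},\cdot)-\beta(X^\ell_{2n},\cdot)$ times $\Delta W^{\ell}_{2n+1}$, whose $L^p$ size is $O(\Delta t_\ell)$ for each fixed $n$. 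Summing the squared increments over the $O(1/\Delta t_\ell)$ steps and taking the maximum is exactly what yields $O(\sqrt{\Delta t_\ell})$ for $p=1$; the logarithmic factor at $p=2$ is the standard artifact of passing the maximum \emph{inside} before applying BDG (equivalently, of the $\sum_n (\Delta t_\ell)^2 \cdot \log(1/\Delta t_\ell)$ that appears in Doob's inequality for the maximum of the error random walk), while for $p>2$ the extra integrability lets the maximum be absorbed without the log.

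For the mean-field term I would bound $\|\widehat R^\ell_n - \widehat R^{\ell-1}_n\|^p \le \big(\frac{1}{N_\ell}\sum_j \|R^{\ell,j}_n - \tilde R^{\ell,j}_{n/2}\|\big)^p \le \frac{1}{N_\ell}\sum_j \|R^{\ell,j}_n-\tilde R^{\ell,j}_{n/2}\|^p$ by convexity, then use the Lipschitz bound on $R$ (Assumption 1, applied to $R$, or absorbed into $\alpha,\beta$'s third-argument Lipschitz constants) so this is controlled by $\frac{1}{N_\ell}\sum_j \E\max\|\delta^{\ell,j}\|^p$; since all samples at level $\ell$ are exchangeable this equals $\E\max\|\delta^{\ell,i}\|^p$ for a single $i$. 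Thus term (iii) contributes a multiple of the very quantity on the left, and for $\Delta t_\ell$ (hence $\Delta t_{\ell-1}$) small enough the Lipschitz constant times $\Delta t_{\ell-1}$ is $<1$ and it can be moved to the left-hand side. What remains is a pure discrete Gronwall inequality in the index $k$ with forcing equal to the one-step error bound, and Gronwall's lemma converts the $\Delta t_{\ell-1}\sum$ into the harmless factor $e^{CT}$.

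\textbf{The main obstacle} I anticipate is keeping the coupling between samples from forcing me down the propagation-of-chaos route used in Theorem 1: one has to be careful that the feedback from $\widehat R^\ell_n$ does not bring in the level-$(\ell-1)$ errors $X^{\ell-1}-\mathcal X^{\ell-1}$ in an uncontrolled way. The resolution is exactly the observation above — in the \emph{difference} $\widehat R^\ell_n-\widehat R^{\ell-1}_n$ the shared coarse quantity $\widehat R^{\ell-1}_{n/2}$ cancels, leaving only level-$\ell$ quantities — so the estimate is genuinely local to level $\ell$ and does not need the more delicate machinery of Appendix A. The secondary technical nuisance is getting the $|\log\Delta t_\ell|$ at $p=2$ sharp rather than, say, $|\log\Delta t_\ell|^2$: this requires applying Doob's maximal inequality to the error martingale \emph{once}, after summing the per-step variances, rather than naively taking the max step by step, and then invoking the $p>2$ case via interpolation only where it actually helps.
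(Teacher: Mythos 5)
Your skeleton — two-step telescoping for $X^{\ell}_{2k}-\tilde X^{\ell}_{k}$, BDG for the martingale parts, discrete Gr\"onwall, and the observation that $\widehat{R}^\ell_n-\widehat{R}^{\ell-1}_{n/2}=\frac{1}{N_\ell}\sum_j(R^{\ell,j}_n-\tilde R^{\ell,j}_{n/2})$ is controlled by the very quantity being estimated via exchangeability — matches the paper's proof of terms I, II and the Gr\"onwall closure. But there is a genuine gap in your treatment of the residual terms $(\alpha^\ell_{k+1}-\alpha^\ell_k)\Delta t_\ell$ and $(\beta^\ell_{k+1}-\beta^\ell_k)\Delta W^\ell_{k+1}$ (the paper's terms III and IV). By the Lipschitz assumption these are bounded by $\norm{X^\ell_{k+1}-X^\ell_k}+\norm{\widehat{R}^\ell_{k+1}-\widehat{R}^\ell_k}$, and you only account for the pathwise piece. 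The second piece cannot be handled "locally at level $\ell$": for odd index $k+1$ the estimator $\widehat{R}^\ell_{k+1}$ is defined through the linear interpolation of $\widehat{R}^{\ell-1}$ at half-integer times, so $\widehat{R}^\ell_{k+1}-\widehat{R}^\ell_k$ contains $\tfrac12\bigl(\widehat{R}^{\ell-1}_{(k+2)/2}-\widehat{R}^{\ell-1}_{k/2}\bigr)$ — a one-coarse-step increment of the \emph{lower-level} mean field, which does not cancel. The paper resolves this with a recursion down through all the levels, (\ref{complicatedbnd})--(\ref{reducedbnd}), yielding $\E\norm{\widehat{R}^\ell_{k+1}-\widehat{R}^\ell_k}^p\le K\sum_{m=0}^{\ell}2^{m-\ell}(\Delta t_m^p+\Delta t_m^{p/2})$, and it is this geometric sum over levels — not any maximal-inequality artifact — that produces the three regimes: for $p=2$ every term equals $\Delta t_\ell$ and the sum contributes the factor $\ell+1\sim|\log\Delta t_\ell|$; for $p=1$ the sum is geometric and gives $\sqrt{\Delta t_\ell}$; for $p>2$ it is dominated by $m=0$ and gives $\Delta t_\ell$.

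Your stated resolution of "the main obstacle" is therefore exactly backwards: the cross-level cascade is not avoidable, it is the mechanism behind the claimed rates. Your attribution of the $p=2$ logarithm to Doob/BDG applied to the error random walk would, if carried out, give $O(\Delta t_\ell)$ at $p=2$ with no logarithm from the martingale terms (the quadratic variation sums cleanly), and you would then be unable to close the estimate because the mean-field increment $\widehat{R}^\ell_{k+1}-\widehat{R}^\ell_k$ at odd steps would be left unbounded. To repair the proof you need to add the interpolation identity for $\widehat{R}^\ell_{k+1}$, derive the contraction-plus-forcing recursion in $\ell$ for $\E\norm{\widehat{R}^\ell_{k+1}-\widehat{R}^\ell_k}^p$, and only then feed the resulting $\epsilon_\ell(p)$ into the Gr\"onwall forcing.
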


\begin{proof}
See appendix B.
\end{proof}
This is the bulk of the desired result for even $n$.  The following corollary generalizes to odd $n$: 
\begin{coro}
Under assumptions 1 and 2, 
\begin{equation}
		\max_{k\leq n} \E \left[ \norm{ X^{\ell}_k - \tilde{X}^\ell_{k/2} }^p \right] = \left\{
		\begin{array}{lcc}
		O\left(\sqrt{\Delta t_\ell}\right) & : & p=1 \\
		O\left( \Delta t_\ell \left| \log \Delta t_\ell \right| \right) & : & p=2 \\
		O\left(\Delta t_\ell \right) & : & 2<p \leq p_\textrm{max}
	\end{array} \right.
	\end{equation}
for every $n$ (even or odd) satisfying $n \leq T/\Delta t_{\ell}$.  
\end{coro}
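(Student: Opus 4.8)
I expect the substance of this statement to be carried by Theorem~3; the corollary should follow by two elementary reductions. The first is that $\max_{k\le n}\E[\,\cdot\,]\le\E[\max_{k\le n}\,\cdot\,]$, which disposes of the even indices immediately: given $n\le T/\Delta t_\ell$ and even $k\le n$, I would let $n'$ be the largest even integer with $n'\le n$ and note
\[
\E\norm{X^\ell_k-\tilde X^\ell_{k/2}}^p\ \le\ \E\!\left[\max_{j\le n',\,j\ \textrm{even}}\norm{X^\ell_j-\tilde X^\ell_{j/2}}^p\right],
\]
which Theorem~3 bounds by exactly the claimed right-hand side. So the only real work is passing from even to odd $k$.

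For odd $k=2m+1$ the interpolation rule reads $\tilde X^\ell_{(2m+1)/2}=\tfrac12(\tilde X^\ell_m+\tilde X^\ell_{m+1})$, and I would use the identity
\[
X^\ell_{2m+1}-\tilde X^\ell_{(2m+1)/2}=\left(X^\ell_{2m}-\tilde X^\ell_m\right)+\left(X^\ell_{2m+1}-X^\ell_{2m}\right)+\tfrac12\left(\tilde X^\ell_m-\tilde X^\ell_{m+1}\right).
\]
The first bracket carries an even index with $2m\le n-1<n$, so it is controlled by Theorem~3 (again via $\max\E\le\E\max$) and obeys the stated $O(\sqrt{\Delta t_\ell})$, $O(\Delta t_\ell|\log\Delta t_\ell|)$, $O(\Delta t_\ell)$ bounds in $L^p$ for $p=1$, $p=2$, $2<p\le p_\textrm{max}$. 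The remaining two brackets are single Euler steps: $X^\ell_{2m+1}-X^\ell_{2m}=\alpha(\cdot)\Delta t_\ell+\beta(\cdot)\Delta W^\ell_{2m}$ from (\ref{levl}), and $\tilde X^\ell_m-\tilde X^\ell_{m+1}=-\alpha(\cdot)\Delta t_{\ell-1}-\beta(\cdot)\Delta\tilde W^\ell_m$ with $\Delta\tilde W^\ell_m$ of variance $\Delta t_{\ell-1}=2\Delta t_\ell$. Conditioning on the natural filtration makes the Brownian part of each increment Gaussian, so Assumptions~1 and 2 together with the uniform moment bounds on the discrete trajectories and their mean-field estimates that underlie the proof of Theorem~3 give $\E\norm{\cdot}^p\le C\left(\Delta t_\ell^p+\Delta t_\ell^{p/2}\right)=O\!\left(\Delta t_\ell^{p/2}\right)$ for each. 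This is $O(\sqrt{\Delta t_\ell})$ when $p=1$ and $O(\Delta t_\ell^{p/2})\le O(\Delta t_\ell)$ when $p\ge 2$, so it never exceeds the contribution of the first bracket.

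Finally I would apply convexity in the form $\norm{a+b+c}^p\le 3^{p-1}\left(\norm{a}^p+\norm{b}^p+\norm{c}^p\right)$, take expectations, and maximize over all $k\le n$ (even and odd together), obtaining the corollary uniformly in $n\le T/\Delta t_\ell$. I anticipate no serious obstacle: the only points requiring attention are the index bookkeeping near the right end of the time grid (harmless, since $T/\Delta t_{\ell-1}$ is taken to be an integer, so $\tilde X^\ell_{m+1}$ is always defined when $2m+1\le n$) and confirming that the one-step increment estimates really do follow from Assumptions~1–2 and the moment propagation already established for Theorem~3 — this being the one place where the argument uses more than the triangle and convexity inequalities.
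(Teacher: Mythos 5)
Your proposal is correct and follows essentially the same route as the paper's proof: the even indices are dispatched by Theorem 3 via $\max_k \E \leq \E \max_k$, and the odd indices are handled by writing the difference as the adjacent even-index difference plus single Euler increments, which are bounded by $O(\Delta t_\ell^p + \Delta t_\ell^{p/2})$ using the uniform moment bounds of Assumption 2 and are therefore dominated by the even-index contribution. The only cosmetic difference is that you split the correction into a forward step of $X^\ell$ and half a backward step of $\tilde X^\ell$, whereas the paper groups the same terms as differences of $\alpha$ and $\beta$ evaluations; the estimates invoked are identical.
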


\begin{proof}
For even $n$, this is a direct consequence of theorem 1.  For odd $n$, we note that
\begin{equation}
\begin{split}
	\E \left[ \norm{ X_{n}^{\ell} - \tilde{X}_{n/2}^{\ell} }^p \right] &\leq
	K\E  \left[ \norm{ X_{n-1}^{\ell} - \tilde{X}_{(n-1)/2}^{\ell} }^p \right] \\
	&+ K\E \left[ \norm{\alpha_{n-1}^{\ell} - \tilde{\alpha}_{(n-1)/2}^{\ell} }^p \right] \Delta t_{\ell}^p \\
	&+ K\E \left[ \norm{\beta_{n-1}^{\ell} -  \tilde{\beta}_{(n-1)/2}^{\ell} }^p \right] \Delta t_{\ell}^{p/2}.
\end{split}
\end{equation}
Since $n$ is odd, $n-1$ is even and theorem 1 provides a bound for the first norm on the right.  The bounds on expectations of $\alpha$ and $\beta$ give uniform bounds on the second two norms.  The result immediately follows by taking maximums.   
\end{proof}
 
We thus have the following bound on the variance of the level differences.  
\begin{coro}
For any Lipschitz function $P$, 
\begin{equation}
	\max_{k\leq n} \textrm{Var} \left[ P^{\ell}_k - \tilde{P}^\ell_{k/2} \right] = O\left( \Delta t_\ell | \log \Delta t_\ell | \right)
\end{equation}
for every $n \leq T / \Delta t_{\ell}$.  
\end{coro}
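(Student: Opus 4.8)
The plan is to reduce the asserted variance bound to the second-moment estimate already in hand from the preceding corollary. First I would use the elementary inequality $\textrm{Var}[Y] \le \E[Y^2]$, valid for any scalar random variable $Y$, together with the vector-valued convention $\textrm{Var}[P] = \max_j \textrm{Var}[P_j]$, so that it suffices to bound $\E\big[\,((P^{\ell}_k)_j - (\tilde P^{\ell}_{k/2})_j)^2\,\big]$ for each component $j$. Next, since $P$ is Lipschitz, there is a constant $L_P$ with $\norm{P(x) - P(y)} \le L_P \norm{x-y}$, and in particular each component satisfies $|(P(x))_j - (P(y))_j| \le L_P \norm{x-y}$; applying this to $x = X^{\ell}_k$, $y = \tilde X^{\ell}_{k/2}$, squaring, and taking expectations yields
\[
	\max_{k\le n}\textrm{Var}\!\left[ P^{\ell}_k - \tilde P^{\ell}_{k/2}\right] \;\le\; L_P^2 \,\max_{k\le n} \E\!\left[ \norm{X^{\ell}_k - \tilde X^{\ell}_{k/2}}^2 \right].
\]

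Finally, I would invoke the $p=2$ case of the previous corollary, which asserts exactly that $\max_{k\le n}\E[\norm{X^{\ell}_k - \tilde X^{\ell}_{k/2}}^2] = O(\Delta t_\ell |\log \Delta t_\ell|)$ uniformly for $n \le T/\Delta t_\ell$. Substituting this into the display gives the claimed bound, with the implicit constant picking up the factor $L_P^2$.

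Every step here is routine, so there is no genuine obstacle; the only point worth a moment's care is that the estimate being used is the corollary's version with the maximum \emph{outside} the expectation and ranging over all $k$ (even and odd), rather than Theorem 3's version with the maximum inside and restricted to even $k$ — but that promotion is precisely the content of the intervening corollary, so nothing further is needed.
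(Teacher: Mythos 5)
Your proposal is correct and follows essentially the same route as the paper: reduce the variance of the payoff difference to the second moment of $\norm{X^\ell_k - \tilde X^\ell_{k/2}}$ via the Lipschitz bound, then invoke the $p=2$ case of the preceding corollary. The one point you elide is the reason the paper's proof spends a whole display on odd $n$: for odd $k$ the quantity $\tilde P^\ell_{k/2}$ is defined (per the paper's interpolation convention) as the linear interpolant of the payoff values $\tilde P^\ell_{(k-1)/2}$ and $\tilde P^\ell_{(k+1)/2}$, not as $P$ evaluated at the interpolated state $\tilde X^\ell_{k/2}$, so the single application of the Lipschitz bound at the pair $(X^\ell_k,\tilde X^\ell_{k/2})$ does not literally apply. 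The fix is exactly what the paper does: split off the half-step increments $\tilde X^\ell_{k/2}-\tilde X^\ell_{(k\pm1)/2}$, whose second moments are $O(\Delta t_\ell)$ by the boundedness of the coefficients, and which are therefore dominated by the $O(\Delta t_\ell|\log\Delta t_\ell|)$ main term. With that one-line addition your argument is complete and, if anything, cleaner than the paper's (you use $\textrm{Var}[Y]\le \E[Y^2]$ directly rather than carrying the $\norm{\E[\cdot]}^2$ terms separately).
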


\begin{proof}
By assumption 1 above, we have
\begin{equation}
\begin{split}
	\textrm{Var}\left[P^{\ell}_n - \tilde{P}^{\ell}_{n/2}\right] &\lesssim \E \left[ \norm{X_n^{\ell+1} - X_{n/2}^\ell}^2 \right] + \norm{\E\left[ P_n^{\ell} - \tilde{P}_{n/2}^\ell \right]}^2 \\
	&\lesssim \E \left[ \norm{X_n^{\ell} - \tilde{X}_{n/2}^\ell}^2 \right] + \E \left[ \norm{X_n^{\ell} - \tilde{X}_{n/2}^\ell} \right]^2 
\end{split}
\end{equation}
for $n$ even, while for odd $n$,
\begin{equation} \label{varOdd}
\begin{split}
	\textrm{Var}\left[P^{\ell}_n - \tilde{P}^{\ell}_{n/2}\right] &\lesssim
	 \E \left[ \norm{X_n^{\ell} - \tilde{X}_{n/2}^\ell}^2 \right] + \E \left[ \norm{\tilde{X}_{n/2}^{\ell} - \tilde{X}_{(n+1)/2}^\ell}^2 \right] +  \E \left[ \norm{\tilde{X}_{n/2}^{\ell} - \tilde{X}_{(n-1)/2}^\ell}^2 \right] \\
	 &+ \norm{\E\left[ P_n^{\ell} - \tilde{P}_{n/2}^\ell \right]}^2 + \norm{\E\left[ \tilde{P}_{n/2}^{\ell} - \tilde{P}_{(n+1)/2}^\ell \right]}^2 + \norm{\E\left[ \tilde{P}_{n/2}^{\ell} - \tilde{P}_{(n-1)/2}^\ell \right]}^2 \\
	 &\lesssim  \E \left[ \norm{X_n^{\ell+1} - X_{n/2}^\ell}^2 \right] + \E \left[ \norm{X_n^{\ell+1} - X_{n/2}^\ell} \right]^2 + O(\Delta t_\ell).
\end{split}
\end{equation}
By corollary 3, we have
\begin{equation}
	\textrm{Var}\left[P(X^{\ell+1}_n) - P^{\ell}_{n/2}\right] =
	O(\Delta t_\ell |\log \Delta t_\ell |) + O\left(\Delta t_\ell \right) 
\end{equation}
for both even and odd $n$.  Everything is independent of $n$, so maximizing over $n$ and ignoring the smaller terms gives the desired result.
\end{proof}

If $\kappa$ scales like 
\begin{equation}
	\kappa \sim \frac{2}{\varepsilon^2}\left( \sum_{\ell=0}^L \sqrt{\frac{V_\ell}{\Delta t_\ell}} \right)^2
\end{equation}
as in the standard MLMC case - see (\ref{complexity}) - this variance scaling yields an algorithm with $\kappa \sim \varepsilon^{-2} (\log \varepsilon)^4$, in close approximation to the result of Theorem 2.


\section{Numerical Results}

We conduct three types of numerical test.  First, we apply our scheme to an equation of type (\ref{linear}) to confirm the convergence and complexity results we've proved in that context.  Second, we consider a plane-rotator model of a ferromagnet with infinite interaction range.  Models of this type have been explored by a variety of authors \cite{frank2005nonlinear,kostur2002nonequilibrium,miyashita1980monte,miyashita1978monte}.  Even though this system does not satisfy the hypotheses of section 4.1, we observe the same convergence and complexity scalings.  Third, we apply our method to the Vlasov-Poisson system, which has many applications in plasma physics.  We again observe the predicted convergence and complexity behavior.  


\subsection{Linear equation tests}
We work in $d=1$ with fixed $\sigma$.  That is, we solve
\begin{equation}
	dX_t = (aX_t + b\E [X_t])dt + \sigma dW_t.  
\end{equation}
It is straightforward to find exact solutions for the mean and variance of the solution.  Specifically, by taking moments of the corresponding PDE and solving the resulting ODEs, we find
\begin{equation} \label{exsol}
	\E [X_t] = \E [X_0] e^{(a+b)t}, \qquad \textrm{Var}[X_t] = \left(\textrm{Var}[X_0] + \frac{\sigma^2}{2a}\right)e^{2at} - \frac{\sigma^2}{2a}.
\end{equation}
These exact results are used for comparison to the results of our multilevel method.  

We set $a = -1/2$, $b = 4/5$, $\sigma^2 = 1/2$, and $P(x) = x^2$.  Since $R(x) = x$ in this case, we can estimate the variance of $X_{t^L_n}$ by 
\begin{equation}
	\textrm{Var} \left[X_{t^L_n}\right] \approx \widehat{P}^L_n - \left( \widehat{R}^L_n \right)^2,
\end{equation}
and compare to the exact solution in (\ref{exsol}).  We first examine the convergence and complexity of the method.  These results appear in fig.\ \ref{LinSysConvergence}.  Here and in the proceeding convergence studies, error bars give estimates of the standard deviation of the $L^1$-error found by performing 20 independent computations for each $\varepsilon$.  Model complexities and computation times shown are averages over those same 20 computations. 

\begin{figure}[h] 
  \centering
	\includegraphics[width=.49\textwidth]{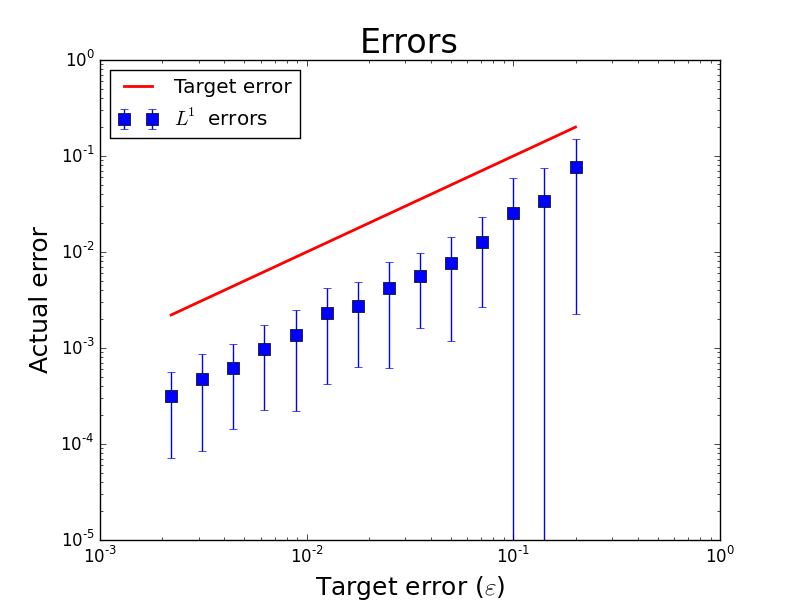}
	\includegraphics[width=.49\textwidth]{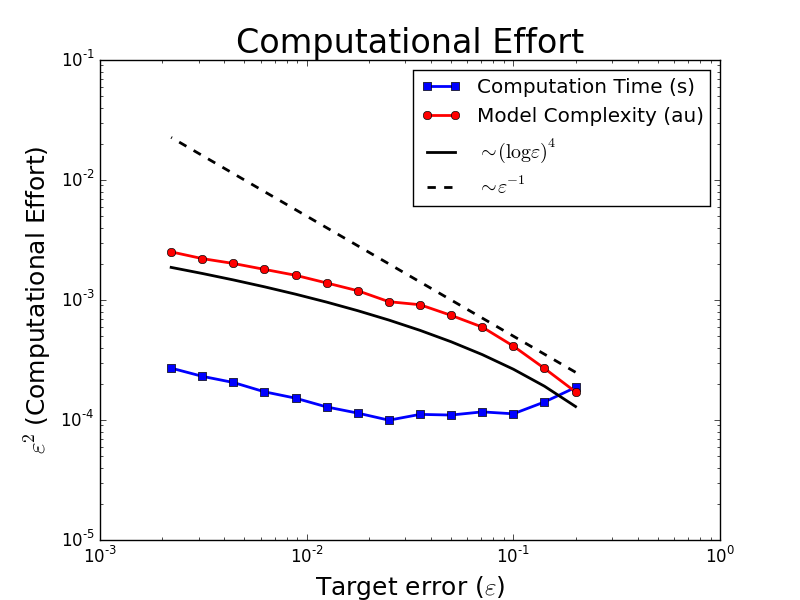}
	\caption{\label{LinSysConvergence} Convergence and complexity studies for the linear system, demonstrating the predicted behavior.}
\end{figure}

We then check that the $V_\ell$ scale as expected.  These results are shown in fig.\ \ref{LinSysVarScale}, where we find strong evidence for $O(\Delta t_\ell)$ scaling - slightly better than our theory requires.    

\begin{figure}[h] 
  \centering
	\includegraphics[width=.7\textwidth]{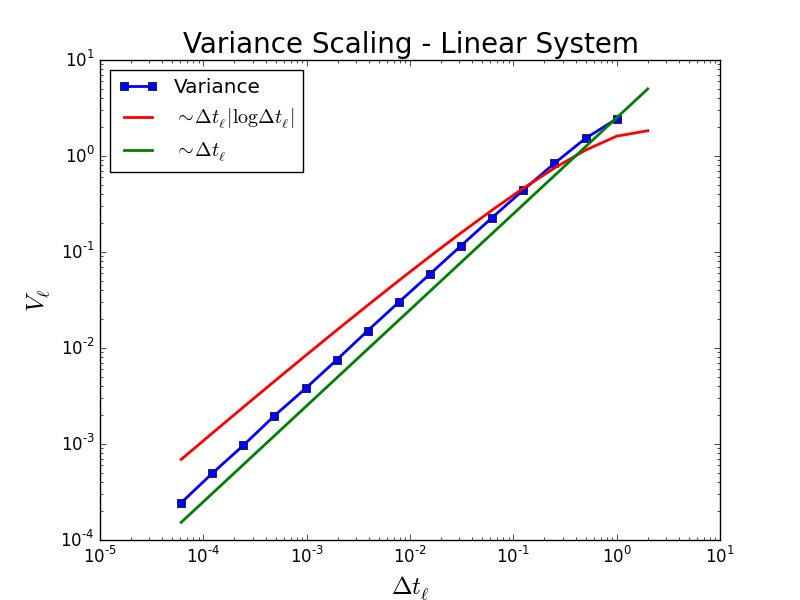}
	\caption{\label{LinSysVarScale} Variance of level differences in the linear model.  The observed $O(\Delta t_\ell)$ scaling is consistent with our analysis.}
\end{figure}
Additionally, it is worth noting that in our computations, both here and in our subsequent tests, we find evidence for $O(\Delta t)$ weak error, which is better than our strong convergence analysis requires.  This is manifested in the fact that we require an additional level when $\varepsilon$ is reduced by a factor of 2, not $\sqrt{2}$ as one expects for $O(\sqrt{\Delta t})$ weak convergence.  A rigorous proof of this improved weak convergence in arbitrary dimension is an interesting avenue for future research.  
\subsection{Plane-rotator tests}
We use the equation studied in \cite{kostur2002nonequilibrium}: 
\begin{equation}
	dX_t = \left\{ K \E' \left[ \sin (X_t' - X_t) \right] -\sin X_t \right\} \, dt + \sqrt{2\tau} dW_t,
\end{equation}
where $X_t \in \R$, $\tau$ is the background temperature, $K$ a coupling constant, and $\E '$ indicates expectation over the primed variable.  Physically, $X_t$ is the angle from some fixed axis of some oscillator - e.g.\ the magnetic moment of an atom or molecule.  These are presumed to interact via the $K \E'\sin(X_t' - X_t)$ term, and are subject to some external field or anisotropy aligned with the axis that leads to the $-\sin X_t$ term.  The stochastic term represents immersion in a heat bath at temperature $\tau$.  

A simple trigonometric identity makes our method directly applicable:
\begin{equation}
	dX_t = \left\{ K \left( \E \left[\sin X_t \right] \cos X_t - \E \left[ \cos X_t \right] \sin X_t \right) - \sin X_t \right\} \, dt + \sqrt{2\tau} dW_t.
\end{equation}
We set $K=1$, $\tau = 1/8$, $P(x) = \sin x$, $\Delta t_0 = T$, generate samples of $X_0$ from $N(\pi/2, 3\pi/4)$, and simulate to the terminal time $T=5$.  Our multilevel simulations are compared to the results of an over-resolved single level scheme of the type (\ref{discreteSLMV}), using $\Delta t = T/1024$ and $N = 5 \times 10^7$.

As above, we plot convergence and computation time data in fig.\ \ref{PRotConvComp}.  We again observe the expected behavior.   
\begin{figure}[h]
  \centering
	\includegraphics[width=.49\textwidth]{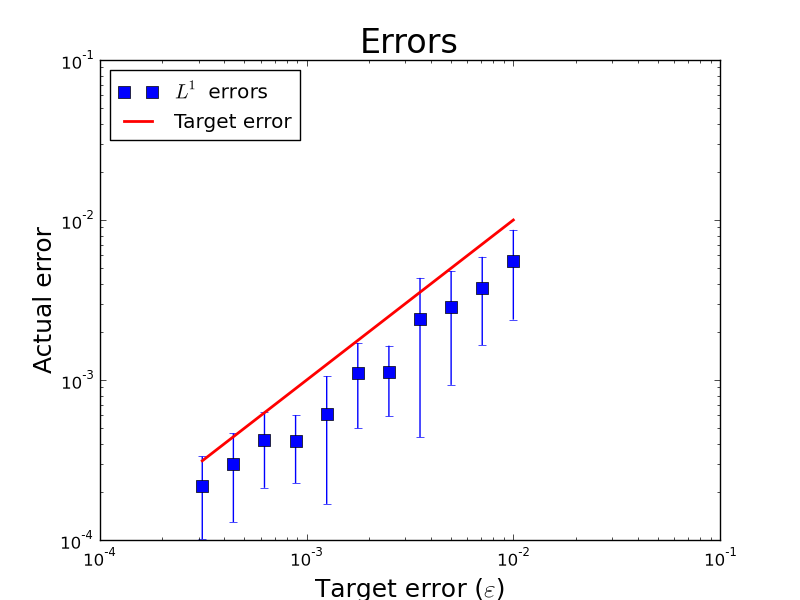}
	\includegraphics[width=.49\textwidth]{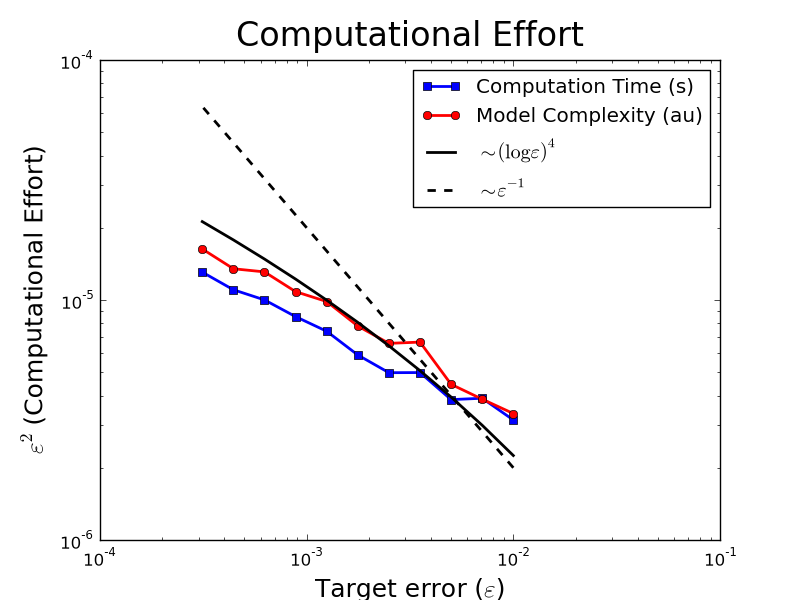}
	\caption{\label{PRotConvComp}(Left) Convergence of the multilevel method for the plane-rotator system.  (Right) Computational time and model complexity for plane-rotator system.}
\end{figure}
Additionally, we plot the $V_\ell$ in fig.\ \ref{PRotVarScale}.  We find results in agreement with the $O(\Delta t_\ell |\log \Delta t_\ell|)$ predicted by corollary 4.  
\begin{figure}[h] 
  \centering
	\includegraphics[width=.7\textwidth,height=.55\textwidth]{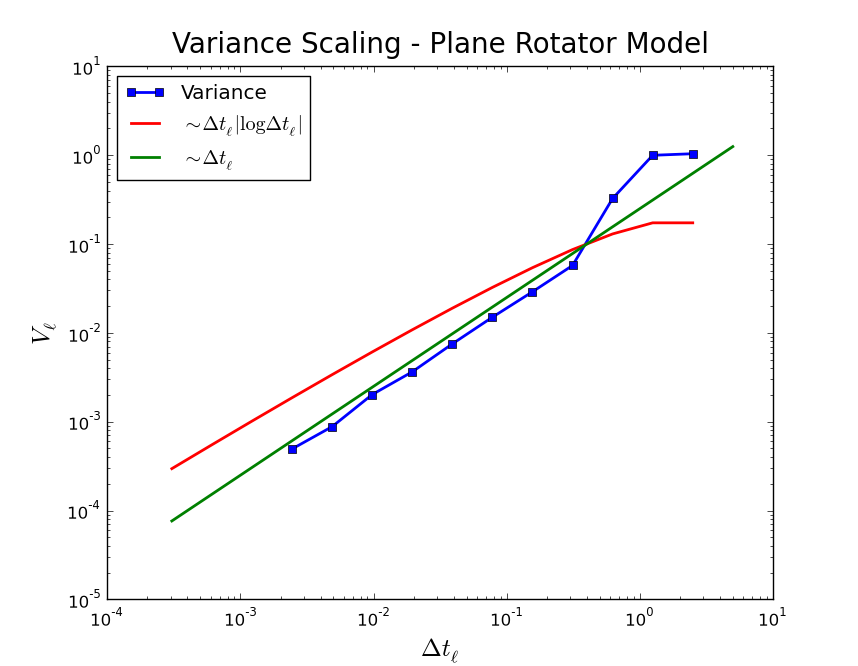}
	\caption{\label{PRotVarScale} Variance of level differences in in the plane rotator model.  The observed $O(\Delta t_\ell)$ scaling is consistent with our analysis.}
\end{figure}
\subsection{Particle-in-cell tests}
The Vlasov-Poisson system, 
\begin{equation} \label{VlasovPoisson}
	\partial_t f + v \cdot \nabla_x f - \nabla \phi \cdot \nabla_v f = 0, \qquad -\Delta \phi = \rho - \rho_0 = \int f \, dv - \rho_0
\end{equation}
for the phase space particle density $f(x,v,t)$ and electrostatic potential $\phi(x,t)$, is an important equation in a variety of plasma physics applications - see e.g.\ \cite{goldman2008vlasov, kulsrud1988analysis, sonnendrucker2001simulation,umeda2006vlasov,vay2002mesh}, among many others.  In the simplest case, $\rho_0$ is the constant $\rho_0 = \int \rho \, dx$.  

The particle-in-cell (PIC) method has long been the method of choice for this system.  A full review of the method is beyond the scope of this paper - we refer the interested reader to \cite{birdsall2004plasma}.  To briefly summarize, we rewrite (\ref{VlasovPoisson}) in terms of particle trajectories: 
\begin{equation}
	\frac{dx}{dt} = v, \qquad \frac{dv}{dt} = -\nabla \phi.
\end{equation}
For simplicity, we work in one space and one velocity dimension, with periodic boundary conditions.  We introduce a grid in $x$-space with grid points $x_i$ and grid size $h$.  We then approximate
\begin{equation}
	\rho(x_i) = \int \delta(x - x_i) f \, dx\, dv = \E_f \left[ \delta(x - x_i) \right] \approx \E_f \left[ S\left(\frac{x - x_i}{h} \right) \right],
\end{equation}
where $S(x)$ is some approximation of the Dirac delta function.  We use the most common choice in elementary PIC schemes: $S(x) = \max \left\{ 1 - |x|, 0 \right\}$.  

The $\rho(x_i)$ play the role of $R$, and the map $\rho(x_i) \rightarrow -\nabla \phi(x_i)$ - using the FFT to compute $\phi$ and its derivative - plays the role of $\alpha$.  We assume that $\nabla \phi$ is continuous and piecewise linear to completely specify $\alpha$.  

Evidently, $\beta \equiv 0$ for this system.  However, Coulomb collisions are typically introduced as a Fokker-Planck operator on the right side of (\ref{VlasovPoisson}), thus introducing non-zero $\beta$ and making use of the full weight of the work presented here.  This will be expanded upon in future work.    

In the numerical experiments presented here, we work in a dimensionless formulation with domain length $\mathcal{L} = 20$, $h = 1$, terminal time $T = 12$, and $\Delta t_0 = 1/3$. Initial particle positions are sampled from $N(10,6)\mod \mathcal{L}$, with the modulo present merely to ensure all particles reside in the computational domain.  Initial particle velocities are sampled from $N(0,1)$.  We let the payoff function be the vector of the $\rho(x_i)$.  That is, $P = R$ in this case.  

Convergence and computation time are shown in fig.\ \ref{PICconv}.  As before, we compare to a highly accurate single-level (standard PIC) simulation when the analytic solution is not available.  Here, we use $3.2 \times 10^5$ particles per cell and $\Delta t = 10^{-3}$ with $h$ unchanged.  We again observe the expected convergence and complexity scaling.  
\begin{figure}[h]
  \centering
	\includegraphics[width=.49\textwidth]{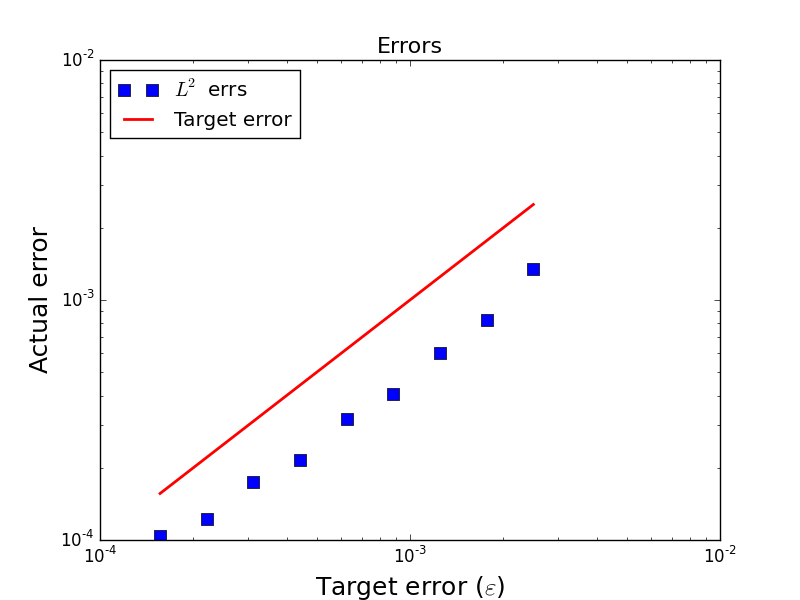}
	\includegraphics[width=.49\textwidth]{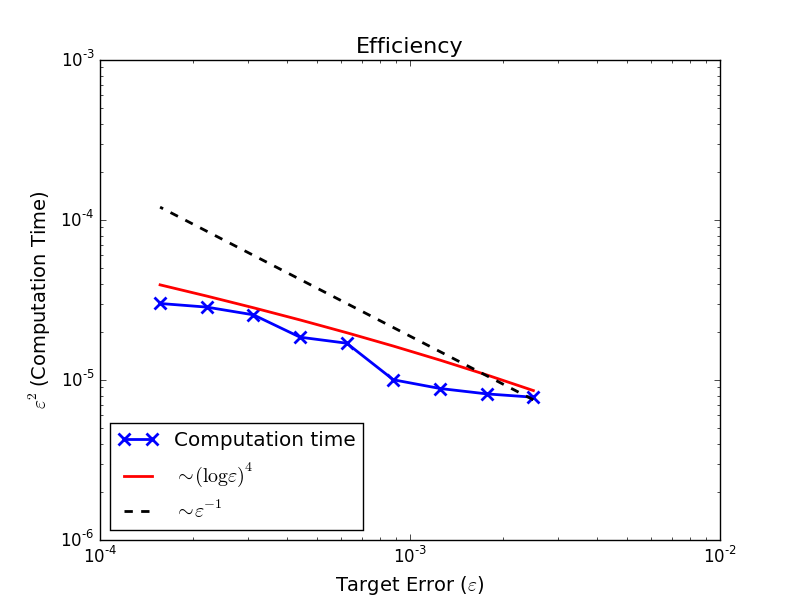}
	\caption{\label{PICconv}(Left) Convergence in density $\rho$.  (Right) Computation time as a function of $\varepsilon$, demonstrating the predicted scaling.}
\end{figure}
Admittedly, the factor of $\sim 4$ speed gain of the multilevel scheme in this context is not overly impressive.  This is largely due to the CFL condition's restriction on the size of $\Delta t_0$, which limits the number of levels one can explore using a serial code on a personal computer.  On a more powerful machine, one could explore smaller $\varepsilon$, where much larger speed increases are to be expected.  Moreover, an interesting direction for future research is the use of the multilevel scheme in concert with recent developments in implicit PIC schemes \cite{chacon2013charge,chen2013analytical,chen2011energy,chen2012efficient}, which eliminate upper bounds on time-step.  

\begin{figure}[h]
  \centering
	\includegraphics[width=.7\textwidth]{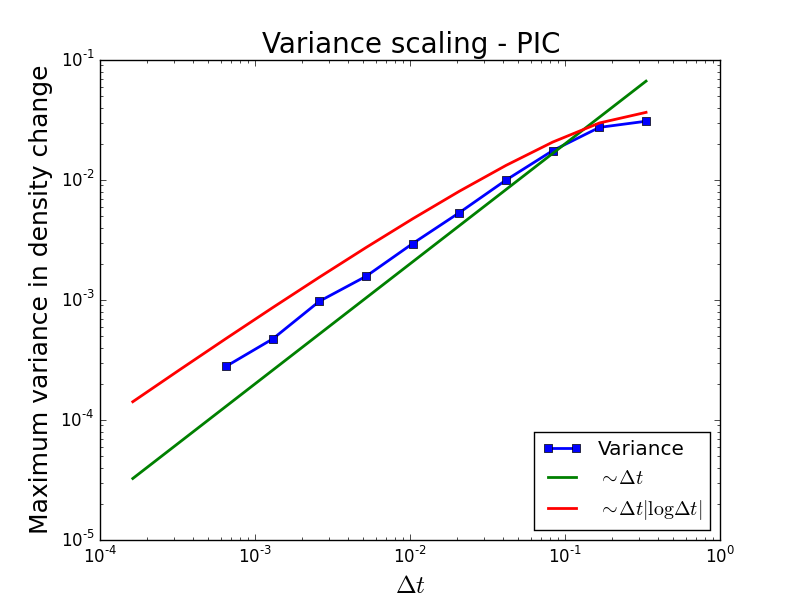}
	\caption{\label{PICVarScale}Variance of level differences in 1D1V multilevel PIC scheme.  The $O(\Delta t)$ scaling agrees with analysis.}
\end{figure}

As before, we also confirm that $V_\ell$ scales as predicted.  In fig.\ \ref{PICVarScale}, we again find excellent agreement with the prediction of corollary 4.


\section{Conclusions}
We have presented and analyzed a new multilevel Monte Carlo algorithm for simulation of McKean-Vlasov processes.  In a particular case, convergence and complexity results have been established that demonstrate performance comparable to the standard MLMC method for SDEs.  In addition, we have proved a variance scaling result that suggests similar performance in much greater generality.  

The extension of MLMC to McKean-Vlasov processes opens up a variety of applications to interacting particle systems.  Numerical tests have been performed that explore a few of those potential applications, including plane-rotator models and kinetic plasma dynamics.  In each case, the expected convergence and complexity are observed, even in cases in which our theory does not strictly apply.  

There are numerous directions in which this work could be extended.  From a theoretical perspective, rigorous convergence and complexity results in more generality are of obvious interest.  The results in section 4.2 represent a step in that direction, but are by no means complete.  HIgher-order time stepping schemes and their effect on the complexity and stability of the method are also areas for further investigation.  Additionally, in this work we have assumed that the refinement factor $\Delta t_\ell / \Delta t_{\ell-1} = 2$ for simplicity, but this choice of ratio has been found not to be optimal for standard MLMC \cite{giles2008multilevel} - for example, 4 is a popular choice.  It would be interesting to investigate generalization of the method presented here to arbitrary refinement factor, and to study what factor(s) might be optimal in this case.  

In applications, plasma simulation is of particular interest to the author.  As already mentioned, implicit time-stepping techniques used in conjunction with the multilevel approach presented here are particularly promising for accelerating PIC schemes, since this would eliminate the upper bound on $\Delta t_0$.  One also wonders whether multilevel methods in both space and time can be used in concert to further accelerate the simulation, as in \cite{haji2014multi}.  Each of these developments would also be a boon to simulations of Fokker-Planck models for rarefied gas dynamics - e.g.\ \cite{gorji2011fokker}.  Simulations in higher dimensions are of obvious interest, as are the incorporation of higher order time-stepping schemes and shape functions.  Each of these is a topic of current research for the author.


\section*{acknowledgements}
The author is particularly grateful to Mark Rosin, Russel Caflisch, and Jonathan Goodman for many, many fruitful conversations regarding this work.  Additional thanks go to Antoine Cerfon, Harold Weitzner, Bruce Cohen, Andris Dimits, and Jacob Bedrossian for discussions of plasma physical applications, as well as to Charles Newman and Robert Kohn.

\begin{appendix}

\section{Proof of Theorem 1}

For brevity, we will omit the sample index $i$ in this proof, since we always refer to the same sample.  We note first that
\begin{equation}
\begin{split}
	(X^\ell_n - \mathcal{X}^\ell_n) - (X^{\ell-1}_{n/2} - \mathcal{X}^{\ell-1}_{n/2}) &= \Delta t_\ell \sum_{k=0}^{n-1} \left\{ A (X^\ell_k - \mathcal{X}^\ell_k) + B(\widehat{X}^\ell_k - \E [\mathcal{X}^\ell_k]) \right\} \\
	&- \Delta t_{\ell-1} \sum_{k=0}^{n/2 - 1} \left\{ A (X^{\ell-1}_k - \mathcal{X}^{\ell-1}_k) + B(\widehat{X}^{\ell-1}_k - \E [\mathcal{X}^{\ell-1}_k]) \right\}
\end{split}
\end{equation}
for even $n$, and something quite similar for odd $n$.  

Now, we note that
\begin{equation}
\begin{split}
	 \Delta t_{\ell-1} \sum_{k=0}^{n/2 - 1} (X^{\ell-1}_k - \mathcal{X}^{\ell-1}_k) &=  \frac{\Delta t_{\ell-1}}{2} \sum_{k=0}^{n - 1} (X^{\ell-1}_k - \mathcal{X}^{\ell-1}_k) + \frac{\Delta t_{\ell-1}}{4} \left\{ (X^{\ell-1}_0 - \mathcal{X}^{\ell-1}_0) - (X^{\ell-1}_{n/2} - \mathcal{X}^{\ell-1}_{n/2}) \right\} \\
	 &= \Delta t_{\ell} \sum_{k=0}^{n - 1} (X^{\ell-1}_k - \mathcal{X}^{\ell-1}_k) + \frac{\Delta t_{\ell}}{2} (X^{\ell-1}_{n/2} - \mathcal{X}^{\ell-1}_{n/2})
\end{split}
\end{equation}
because of the linear interpolation definitions.  In addition, we have
\begin{equation}
\begin{split}
	\Delta t_\ell \sum_{k=0}^{n-1} \left\{\widehat{X}^\ell_k - \E [\mathcal{X}^\ell_k] \right\} &= \Delta t_{\ell-1} \sum_{k=0}^{n/2 - 1} \left\{ \widehat{X}^{\ell-1}_k - \E[\mathcal{X}^{\ell-1}_k] \right\} \\
	&+ \Delta t_\ell \sum_{k=0}^{n-1} \frac{1}{N_\ell} \sum_{i=1}^{N_\ell} \left\{ (X^\ell_k - \mathcal{X}^\ell_k) - (\tilde{X}^{\ell}_{k/2} - \mathcal{X}^{\ell-1}_{k/2}) \right\} \\
	&+ \frac{\Delta t_\ell}{2} \left\{ (\widehat{X}^{\ell-1}_{n/2} - \E[\mathcal{X}^{\ell-1}_{n/2}]) - (\widehat{X}^{\ell-1}_0 - \E [\mathcal{X}^{\ell-1}_0]) \right\} \\
	&+ \Delta t_\ell \sum_{k=0}^{n-1} \left\{ \frac{1}{N_\ell} \sum_{i=1}^{N_\ell} (\mathcal{X}^\ell_k - \mathcal{X}^{\ell-1}_{k/2}) - \E \left[ \mathcal{X}^\ell_k - \mathcal{X}^{\ell-1}_{k/2} \right] \right\}
\end{split}
\end{equation}
Since we've already concluded that $\tilde{X}^\ell_{k/2}$ and $X^{\ell-1}_{k/2}$ are identically distributed, we can replace one by the other inside expectations.  Defining $\delta^\ell_n = (X^\ell_n - \mathcal{X}^\ell_n) - (X^{\ell-1}_{n/2} - \mathcal{X}^{\ell-1}_{n/2})$, we have
\begin{equation} \label{thingy}
\begin{split}
	\delta^\ell_n &= A \Delta t_\ell \sum_{k=0}^{n-1} \delta^\ell_k - \frac{\Delta t_\ell}{2}(X^{\ell-1}_{n/2} - \mathcal{X}^{\ell-1}_{n/2}) + \Delta t_\ell \sum_{k=0}^{n-1} \nu^\ell_k \\
	&+ B\Delta t_\ell \sum_{k=0}^{n-1} \frac{1}{N_\ell} \sum_{i=1}^{N_\ell} \delta^{\ell,i}_k + \frac{\Delta t_\ell}{2} \left\{ (\widehat{X}^{\ell-1}_{n/2} - \E[\mathcal{X}^{\ell-1}_{n/2}]) - (\widehat{X}^{\ell-1}_0 - \E [\mathcal{X}^{\ell-1}_0]) \right\},
\end{split}
\end{equation}
where the $\nu^\ell_k$, defined by
\begin{equation}
	\nu^\ell_k = \frac{1}{N_\ell} \sum_{i=1}^{N_\ell} (\mathcal{X}^\ell_k - \mathcal{X}^{\ell-1}_{k/2}) - \E \left[ \mathcal{X}^\ell_k - \mathcal{X}^{\ell-1}_{k/2} \right],
\end{equation}
are random variables with zero mean and variance $O(\Delta t_\ell)/N_\ell$ by standard arguments.  

We take the cartesian norm and mean of (\ref{thingy}) and, after taking advantage of the exchangeability of different samples, have
\begin{equation}
\begin{split}
	\E \norm{ \delta^\ell_n } &\leq (|A| + |B|) \Delta t_\ell \sum_{k=0}^{n-1} \E \norm{ \delta^\ell_k } + K \sqrt{\frac{\Delta t_\ell}{N_\ell}} \\
	&+ \frac{\Delta t_\ell}{2} \left\{ \E \norm{X^{\ell-1}_{n/2} - \mathcal{X}^{\ell-1}_{n/2}} + \E \norm{\widehat{X}^{\ell-1}_{n/2} - \E \left[ \mathcal{X}^{\ell-1}_{n/2}\right]} \right\}.
\end{split}
\end{equation}
Importantly, this bound holds - albeit with slightly different constants - for all $n$, though we have only derived it for even $n$.  Deriving its analogue for odd $n$ is directly analogous, so we omit that computation here.  

Applying a discrete Gr\"{o}nwall inequality (see e.g.\ \cite{pachpatte2001inequalities}) gives
\begin{equation} \label{deltabnd}
	\E \norm{ \delta^\ell_n } \leq (1 + C T e^{C T}) \left[ K\frac{\Delta t_\ell}{\sqrt{N_\ell}} + \frac{\Delta t_\ell}{2} \left\{ \E \norm{X^{\ell-1}_{n/2} - \mathcal{X}^{\ell-1}_{n/2}} + \E \norm{\widehat{X}^{\ell-1}_{n/2} - \E \left[ \mathcal{X}^{\ell-1}_{n/2}\right]} \right\} \right],
\end{equation}
where $C \coloneqq |A| + |B|$.  

With this in hand, we note that a trivial telescoping sum implies that
\begin{equation}
	\max_n \E \norm{X^\ell_n - \mathcal{X}^\ell_n} \leq \max_n \E \norm{X^0_n - \mathcal{X}^0_n} + \sum_{m=1}^\ell \max_n \E \norm{\delta^\ell_n}.
\end{equation}
Defining $\epsilon^\ell = \max_n \E \norm{X^\ell_n - \mathcal{X}^\ell_n}$, $\widehat{\epsilon}^\ell = \max_n \E \norm{\widehat{X}^\ell_n - \E [\mathcal{X}^\ell_n]}$ and using (\ref{deltabnd}), we have
\begin{equation}
	\epsilon^\ell \leq K \left( \frac{1}{\sqrt{N_0}} + \sum_{m=1}^\ell \sqrt{\frac{\Delta t_m}{N_m}} \right) + \frac{1}{2} \sum_{m=1}^{\ell-1} \Delta t_m \left\{ \epsilon^m + \widehat{\epsilon}^m \right\}.
\end{equation}
Furthermore, by the definition of $\widehat{X}$, we have
\begin{equation}
	\widehat{\epsilon}^m \leq \widehat{\epsilon}^{m-1} + \epsilon^m + \epsilon^{m-1} \leq \epsilon^m + 2 \epsilon^{m-1} + ... + 2\epsilon^0
\end{equation}
We thus have
\begin{equation} \label{newbnd}
\begin{split}
	\epsilon^\ell &\leq K \left( \frac{1}{\sqrt{N_0}} + \sum_{m=1}^\ell \frac{\Delta t_m}{\sqrt{N_m}} \right) + \sum_{m=1}^{\ell-1} \Delta t_m \left\{ \epsilon^m + \sum_{r=1}^{m-1} \epsilon^r \right\} \\
	&\leq K \left( \frac{1}{\sqrt{N_0}} + \sum_{m=1}^\ell \frac{\Delta t_m}{\sqrt{N_m}} \right) + \sum_{m=1}^{\ell-1} \Delta t_m \sum_{r=1}^{m} \epsilon^r.
\end{split}
\end{equation}
since $\epsilon^0 \leq K_0/\sqrt{N_0}$.  

This clearly implies
\begin{equation}
	\max_{r\leq \ell} \epsilon^r \leq K \left( \frac{1}{\sqrt{N_0}} + \sum_{m=1}^\ell \frac{\Delta t_m}{\sqrt{N_m}} \right) + \sum_{m=1}^{\ell-1} m\Delta t_m \max_{r\leq m} \epsilon^r.
\end{equation}
Applying discrete Gr\"{o}nwall to this over $\ell$ gives
\begin{equation}
	\max_{r\leq \ell} \epsilon^r \leq K \left( \frac{1}{\sqrt{N_0}} + \sum_{m=1}^\ell \sqrt{\frac{\Delta t_m}{N_m}} \right) \left( 1 + \sum_{m=1}^{\ell-1} m\Delta t_m \exp\left\{ \sum_{j=1}^{m-1} j \Delta t_j \right\} \right)
\end{equation}
Bounding sums by their infinite analogues, which are convergent, we finally have
\begin{equation}
	\max_{r\leq \ell} \max_n \E \norm{X^r_n - \mathcal{X}^r_n} \leq K' \left( \frac{1}{\sqrt{N_0}} + \sum_{m=1}^\ell \sqrt{\frac{\Delta t_m}{N_m}} \right).
\end{equation}

\section{Proof of Theorem 3}

We will do the proof in the autonomous case - i.e.\ neither $\alpha$ nor $\beta$ has explicit dependence on $t$.  The generalization to nonautonomy is straightforward, but the presentation is murkier due to the additional terms.  Throughout the proof, $K$ will denote a generic constant which may depend on $T$ and $p$, as well as the Lipshitz constant in assumption 1 and the bounds on the norms in assumption 2.  It will not, however, depend on $\ell$ or $n$.  

We note first that
\begin{equation} \label{2step}
\begin{split}
	X_{n+2}^{\ell} &= X_n^{\ell} + \left( \alpha_n^{\ell} + \alpha_{n+1}^{\ell} \right)\Delta t_{\ell} + \beta_n^{\ell} \Delta W^{\ell}_n + \beta_{n+1}^{\ell} \Delta W^{\ell}_{n+1} \\
	&=  X_n^{\ell} + \alpha_n^{\ell} \Delta t_{\ell-1} + \beta_n^{\ell} \Delta \tilde{W}_{n/2}^\ell + \left( \alpha_{n+1}^{\ell}  - \alpha_{n}^{\ell} \right) \Delta t_{\ell} + (\beta_{n+1}^{\ell} - \beta_n^{\ell})\Delta W_{n+1}^{\ell}
\end{split}
\end{equation}
where we've suppressed the common $i$ superscript.  Subtracting the analogous expression for $\tilde{X}_{(n+2)/2}^\ell$, we have 
\begin{equation}
\begin{split}
	X^{\ell}_{n+2} - \tilde{X}^\ell_{(n+2)/2} = X^{\ell}_{n} - \tilde{X}^\ell_{n/2} &+ (\alpha^{\ell}_n - \tilde{\alpha}^\ell_{n/2})\Delta t_{\ell-1} + (\beta^{\ell}_n - \tilde{\beta}^\ell_{n/2}) \Delta \tilde{W}^\ell_{n/2} \\
	&+ (\alpha^{\ell}_{n+1} - \alpha^{\ell}_n) \Delta t_{\ell} + (\beta^{\ell}_{n+1} - \beta^{\ell}_n)\Delta W_{n+1}^{\ell}.  
\end{split}
\end{equation}
Iterating, taking norms followed by maximums, followed by expectations, and defining $\delta_n^\ell(p) = \E \left[ \max_{m\leq n}\norm{X^{\ell}_m - \tilde{X}^\ell_{m/2}}^p \right]$, we have
\begin{equation}  \label{normsum}
\begin{split}
	\delta^\ell_{n+2}(p) &\leq \Delta t_{\ell-1}^p \E \left[ \max_{m\leq n} \norm{ \sum_{k \textrm{ even}}^m (\alpha^{\ell}_k - \tilde{\alpha}^\ell_{k/2})}^p \right] \\
	&+ \E \left[ \max_{m\leq n} \norm{ \sum_{k \textrm{ even}}^m (\beta^{\ell}_k - \tilde{\beta}^\ell_{k/2}) \Delta \tilde{W}^\ell_{k/2} }^p \right] \\
	&+ \Delta t_{\ell}^p \E \left[ \max_{m\leq n} \norm{ \sum_{k \textrm{ even}}^m (\alpha^{\ell}_{k+1} - \alpha^{\ell}_k) }^p \right] \\ 
	&+ \E \left[ \max_{m\leq n} \norm{ \sum_{k \textrm{ even}}^m (\beta^{\ell}_{k+1} - \beta^{\ell}_k)\Delta W_{k+1}^{\ell} }^p \right].  
\end{split}
\end{equation}
We label the four expectations above I-IV and analyze them separately.  Terms I and III have in common the absence of Brownian motion, so we investigate them first.  

\subsection{Bounding I} 
We have
\begin{equation}
	\textrm{I} \leq (n/2)^{p-1} \sum_{k \textrm{ even}}^n \E \left[ \norm{ \alpha_k^{\ell} - \tilde{\alpha}^\ell_{k/2} }^p \right]
\end{equation}
To bound the expectation inside the sum, we write
\begin{equation} \label{term1}
\begin{split}
	\E \left[ \norm{ \alpha_k^{\ell} - \tilde{\alpha}_{k/2}^{\ell}}^p \right] &\leq K \left\{ \E \left[ \norm{X_k^{\ell} - \tilde{X}_{k/2}^\ell}^p \right] + \E \left[ \norm{\widehat{R}_k^{\ell} - \widehat{R}_{k/2}^{\ell-1}}^p \right]  \right\},
\end{split}
\end{equation}
by virtue of $\alpha$ being Lipschitz in both arguments.  By definition, 
\begin{equation} \label{Rbound1}
\begin{split}
	\E \left[ \norm{\widehat{R}_k^{\ell} - \widehat{R}_{k/2}^{\ell-1}}^p \right] &= 
	\frac{1}{N_{\ell}^p} \E \left[ \norm{\sum_{j=1}^{N_{\ell+1}} \left\{R\left(X_k^{\ell,j} \right) - R\left( \tilde{X}_{k/2}^{\ell,j} \right) \right\}}^p \right] \\
	&\leq \frac{K}{N_{\ell}} \sum_{j=1}^{N_{\ell}} \E \left[ \norm{X_k^{\ell,j} - \tilde{X}_{k/2}^{\ell,j}}^p \right] \\
	&\leq K \E \left[ \norm{X_k^{\ell} - \tilde{X}_{k/2}^{\ell}}^p \right],
\end{split}
\end{equation}
where the last line comes from the exchangeability of the samples.  Substituting this into (\ref{term1}), we have
\begin{equation} \label{usfl1}
	\E \left[ \norm{ \alpha_k^{\ell+1} - \alpha_{k/2}^{\ell}}^p \right] \leq K \E \left[ \norm{X_k^{\ell+1} - X_{k/2}^\ell}^p \right].
\end{equation}
Notice that this bound still holds when $\alpha$ is replaced by $\beta$, since they satisfy all the same assumptions - this will be used in bounding II.  Returning to I, we now have
\begin{equation} \label{term1simp}
	\textrm{I} \leq K (n/2)^{p-1} \sum_{k \textrm{ even}}^n \E \left[ \norm{X_k^{\ell} - \tilde{X}_{k/2}^\ell}^p \right] \leq K n^{p-1} \sum_{k \textrm{ even}}^n \delta^\ell_k(p)
\end{equation}

\subsection{Bounding III}
Next, we turn our attention to III.  By again taking advantage of the fact that $\alpha$ is Lipschitz, we have
\begin{equation} \label{pt2}
\begin{split}
	\textrm{III} &\leq (n/2)^{p-1}\E \left[ \sum_{k \textrm{ even}}^n \norm{ \alpha_{k+1}^{\ell}  - \alpha_{k}^{\ell}}^p \right] \\
	&\leq K n^{p-1} \sum_{k \textrm{ even}}^n\left\{ \E \left[  \norm{ X_{k+1}^{\ell} - X_k^{\ell} }^p \right] + \E \left[ \norm{ \widehat{R}_{k+1}^{\ell} - \widehat{R}_k^{\ell} }^p \right] \right\}.
\end{split}
\end{equation}
Note next that
\begin{equation} \label{simpbnd}
\begin{split}
	\E \left[ \norm{ X_{k+1}^{\ell+1} - X_k^{\ell+1} }^p \right] &\leq 2^{p-1} \E \left[ \norm{\alpha_k^{\ell+1} }^p \right] \Delta t_{\ell+1}^p + 2^{p-1} \E \left[ \norm{\beta_k^{\ell+1} }^p \right] \Delta t_{\ell+1}^{p/2} \\
	&\leq K (\Delta t_\ell^p + \Delta t_\ell^{p/2}),
\end{split}
\end{equation}
where the second line follows from the bounded expectations of the coefficients (assumption 2).

To attack the last norm in (\ref{pt2}), we note that 
\begin{equation}
	\widehat{R}_{k+1}^{\ell} = \frac{1}{2} \left( \widehat{R}_{k/2}^{\ell-1} + \widehat{R}_{(k+2)/2}^{\ell-1} \right) + \frac{1}{N_{\ell}} \sum_{j=1}^{N_{\ell}} \left\{ R_{k+1}^{\ell,j} - \frac{1}{2} \left( \tilde{R}^{\ell,j}_{k/2} + \tilde{R}^{\ell,j}_{(n+2)/2} \right) \right\},
\end{equation}
so that we may write
\begin{equation} \label{complicatedbnd}
\begin{split}
	\E \left[ \norm{ \widehat{R}_{k+1}^{\ell} - \widehat{R}_k^{\ell} }^p \right] &\leq 
	\frac{1}{2} \E \left[ \norm{ \widehat{R}_{(k+2)/2}^{\ell-1} - \widehat{R}_{k/2}^{\ell-1} }^p \right] \\
	&+ 2^{p-1}\E \left[ \norm{ \left\{ R^{\ell}_{k+1} - R^{\ell}_k - \frac{1}{2} \left( \tilde{R}^{\ell}_{k/2} + \tilde{R}^{\ell}_{(k+2)/2} \right)  + \tilde{R}^{\ell}_{k/2} \right\}}^p \right] \\
	&\leq \frac{1}{2} \E \left[ \norm{ \widehat{R}_{(n+2)/2}^{\ell} - \widehat{R}_{n/2}^{\ell} }^p \right] \\
	&+ K \left\{ \E \left[ \norm{ X^{\ell}_{k+1} - X^{\ell}_k }^p \right] +  \E \left[ \norm{ \tilde{X}^\ell_{(n+2)/2} - \tilde{X}^\ell_{n/2} }^p \right] \right\} \\
	&\leq  \frac{1}{2} \E \left[ \norm{ \widehat{R}_{(n+2)/2}^{\ell} - \widehat{R}_{n/2}^{\ell} }^p \right] + K \left\{ \Delta t_\ell^p + \Delta t_\ell^{p/2} \right\}
\end{split}
\end{equation}
where the last inequality results from the Lipschitz bound on $R$ and (\ref{simpbnd}).

In the end, in (\ref{complicatedbnd}), we have a bound on a quantity in terms of its analogue one level lower.  We can thus iterate the process, finding
\begin{equation} \label{reducedbnd}
	\E \left[ \norm{ \widehat{R}_{k+1}^{\ell} - \widehat{R}_k^{\ell} }^p \right] \leq
	2^{-(\ell + 1)} \E \left[ \norm{ \widehat{R}_1^0 - \widehat{R}_0^0}^p \right] + K \sum_{m=0}^\ell 2^{m - \ell} (\Delta t_m^p + \Delta t_m^{p/2}).
\end{equation}
We now specify $p=2$ and note that for $\Delta t_\ell < 1$, 
\begin{equation}
	\E \left[ \norm{ \widehat{R}_{k+1}^{\ell} - \widehat{R}_k^{\ell} }^2 \right] \leq K \Delta t_\ell (\ell + 1).
\end{equation}
For $p=1$, we have
\begin{equation}
	\E \left[ \norm{ \widehat{R}_{n+1}^{\ell+1} - \widehat{R}_n^{\ell+1} } \right] \leq K \Delta t_\ell^{1/2}.
\end{equation}
And finally, for $2<p \leq p_{\textrm{max}}$, 
\begin{equation}
	\E \left[ \norm{ \widehat{R}_{n+1}^{\ell+1} - \widehat{R}_n^{\ell+1} }^p \right] \leq K \Delta t_\ell.
\end{equation}
As such, we define 
\begin{equation}
	\epsilon_\ell(p) \coloneqq \left\{
	\begin{array}{lcr}
		\Delta t_\ell^{1/2} & : & p=1 \\
		\Delta t_\ell \left| \log \Delta t_\ell \right| & : & p=2 \\
		\Delta t_\ell & : & p>2
	\end{array} \right.
\end{equation}
so that upon plugging into (\ref{pt2}), we have
\begin{equation} \label{PT2}
	\E \left[ \norm{ \alpha_{n+1}^{\ell+1}  - \alpha_{n}^{\ell+1}}^p \right] \leq K \epsilon_\ell(p).
\end{equation}
Notice that this bound also applies when $\alpha$ is replaced by $\beta$, since they satisfy all the same assumptions - this will be used in bounding II and IV.  For III, we have 
\begin{equation} \label{IIIbnd}
	\textrm{III} \leq K n^p \epsilon_\ell(p)
\end{equation}

\subsection{Bounding II and IV}
Bounding II and IV is qualitatively different from I and III because the arguments of the norms are martingales.  Recall that the discrete Burkholder-Davis-Gundy inequality requires that the maximum of a martingale is bounded by its quadratic variation.  More precisely,
\begin{equation}
	\E \left[ \max_{m\leq n} \norm{M(m)}^p \right] \leq K \E \left[ \norm{ \sum_{k=1}^n (M(k) - M(k-1))^2 }^{p/2} \right]
\end{equation}
for any discrete martingale $M$.  It follows that
\begin{equation} \label{IIbnd}
\begin{split}
	\textrm{II} &= \E \left[ \max_{m\leq n} \norm{ \sum_{k \textrm{ even}}^m (\beta^{\ell+1}_k - \beta^\ell_{k/2}) \Delta W^\ell_{k/2} }^p \right] \\ 
	&\leq K \E \left[ \norm{ \sum_{k \textrm{ even}}^n \left( (\beta^{\ell+1}_k - \beta^\ell_{k/2}) \Delta W^\ell_{k/2} \right)^2 }^{p/2} \right] \\
	&\leq K n^{p/2-1} \sum_{k \textrm{ even}}^n \E \left[ \norm{ X^{\ell+1}_{k} - X^{\ell}_{k/2} }^p \right] \E \left[ \norm{ \Delta W^{\ell+1}_k }^p \right] \\
	&\leq K(n\Delta t_\ell)^{p/2-1} \Delta t_\ell \sum_{k \textrm{ even}}^n \delta_k^\ell(p),
\end{split}
\end{equation}
where we've used (\ref{usfl1}) with $\alpha \rightarrow \beta$.  

Similarly, for IV we have
\begin{equation} \label{IVbnd}
\begin{split}
	\textrm{IV} &= \E \left[ \norm{ \sum_{k \textrm{ even}}^n (\beta^{\ell+1}_{k+1} - \beta^{\ell+1}_k)\Delta W_{k+1}^{\ell+1} }^p \right] \\
	&\leq K n^{p/2-1} \Delta t^{p/2} \sum_{k \textrm{ even}}^n \E \left[ \norm{ \beta^{\ell+1}_{k+1} - \beta^{\ell+1}_{k} }^p \right] \\
	&\leq K (n\Delta t_\ell)^{p/2} \epsilon_\ell(p) 
\end{split}
\end{equation}
where we've used (\ref{PT2}) with $\alpha \rightarrow \beta$ to get the last line.  

\subsection{Discrete Gr\"onwall}
Finally, substituting (\ref{term1simp}), (\ref{IIIbnd}), (\ref{IIbnd}) and (\ref{IVbnd}) into (\ref{normsum}), we have
\begin{equation} \label{gronwallleft}
	\delta^\ell_{n+2}(p) \leq K \left\{ (n\Delta t_\ell)^{p-1} \Delta t_\ell \sum_{k \textrm{ even}}^n \delta^\ell_k(p) + (n \Delta t_\ell)^p\epsilon_\ell(p) \right\}
\end{equation}
Since, $n\Delta t_\ell$ is bounded by the simulation time $T$, applying the appropriate discrete Gr\"onwall inequality and noting that $\ell = \log_2 (T/\Delta t_\ell)$ gives the desired result.  Namely, 
\begin{equation}
	\delta^\ell_{n}(p)= O\left( \epsilon_\ell(p) \right) \qquad \textrm{for all } n \leq T/\Delta t_{\ell+1}.
\end{equation}
This completes the proof. \qed

\end{appendix}

\bibliographystyle{plain}
\bibliography{GenMcKeanVlasovArxiv}

\end{document}